\newtheorem{theorem}{Theorem}
\newtheorem{lemma}[theorem]{Lemma}
\begin{document}

\title{Characterizing graphs of maximum principal ratio}
\author{Michael Tait and Josh Tobin\thanks{Both authors were partially supported by NSF grant DMS 1362650 \newline \hspace*{4mm} \{\texttt{mtait, rjtobin}\}\texttt{@math.ucsd.edu}}}

\maketitle

\abstract{The principal ratio of a connected graph, denoted $\gamma(G)$,
is the ratio of the maximum and minimum entries of its first eigenvector.
Cioab\u{a} and Gregory conjectured that the graph on $n$ vertices
maximizing $\gamma(G)$ is a kite graph: a complete graph with
a pendant path.  In this paper we prove their conjecture.  
}
 
\section{Introduction}

Several measures of graph irregularity have been proposed to evaluate how far a graph is
from being regular.  In this paper we determine the extremal graphs
with respect to one such irregularity measure,
answering a conjecture of Cioab\u{a} and Gregory \cite{CioabaGregory2007}.

All graphs in this paper will be simple and undirected, and all
eigenvalues are of the adjacency matrix of the graph.
For a connected graph $G$, the eigenvector corresponding to its largest
eigenvalue, the \textit{principal eigenvector}, can be taken to have
all positive entries. If $\textbf{x}$ is this eigenvector, let
$x_{\text{min}}$ and $x_{\text{max}}$ be the smallest and largest eigenvector
entries respectively.  Then define the \textit{principal ratio},
$\gamma(G)$ to be
 \[ \gamma(G) = \frac{x_{\text{max}}}{x_{\text{min}}} .\]
Note that $\gamma(G) \geq 1$ with equality exactly when $G$ is regular,
and it therefore can be considered as a measure of graph irregularity.

Let $P_r \cdot K_s$ be the graph attained by identifying an end vertex
of a path on $r$ vertices to any vertex of a complete graph
on $s$ vertices.  This has been called a \textit{kite graph} or a
\textit{lollipop graph}.  Cioab\u{a} and Gregory \cite{CioabaGregory2007}
conjectured that the connected graph on $n$ vertices maximizing $\gamma$
is a kite graph.  Our main theorem proves this conjecture for $n$ large
enough.

\begin{theorem}\label{main_theorem}
  For sufficiently large $n$, the connected graph $G$ on $n$
  vertices with largest principal ratio is a kite graph.
\end{theorem}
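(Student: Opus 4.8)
The plan is to fix a maximizer $G$ of $\gamma$ over the (finite) set of connected graphs on $n$ vertices, let $\mathbf{x}$ be its Perron eigenvector normalized so that $x_{\min}=1$, write $\lambda$ for the spectral radius, and let $u$ and $w$ be vertices achieving $x_{\max}$ and $x_{\min}$. First I would establish that $\lambda \to \infty$, which is the quantitative leverage driving everything else. For the lower bound, evaluating $\gamma$ on an explicit kite and using that the Perron entries decay geometrically along a pendant path — at the rate $\mu = (\lambda + \sqrt{\lambda^2-4})/2$ solving $t^2 - \lambda t + 1 = 0$ — shows that the optimal kite has a clique of size $\Theta(n/\log n)$ and $\log \gamma(G) = \Omega(n \log n)$. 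For the upper bound, the eigenvector equation gives $x_a \le \lambda x_b$ whenever $a \sim b$ (since $\lambda x_b = \sum_{z \sim b} x_z \ge x_a$), so iterating along a shortest path from $u$ to $w$ yields $\gamma(G) \le \lambda^{\operatorname{diam}(G)} \le \lambda^{n-1}$. Comparing the two bounds forces $\log \lambda = \Omega(\log n)$, so $\lambda$, and hence the per-step decay factor $\mu$, grows with $n$; in particular every induced path exhibits a definite geometric drop.

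Next I would pin down the low-weight end. Using the eigenvector equation at vertices of small weight, I would show that the vertices of least Perron value induce a single pendant path terminating at $w$. There are two points. Efficiency: concentrating a fixed budget of ``tail'' vertices into one path of length $r$ drives the minimum entry down to roughly $\mu^{-r}$ times the weight at the point of attachment, whereas branching, chords, or splitting the budget into several shorter pendant paths leaves the minimum entry larger, hence lowers $\gamma$. Straightening: any tail vertex of degree at least three, any chord inside the tail, or any second pendant path can be removed or rerouted by a switching operation that does not decrease $\gamma$, the geometric decay making the relevant inequalities strict. Together these show the tail is an induced path attached to the dense core at a single vertex.

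I would then show the high-weight end is complete. If two vertices $a,b$ of large Perron value were non-adjacent, adding the edge $ab$ strictly increases the spectral radius: by the Rayleigh quotient, $\lambda(G+e) \ge \lambda(G) + 2 x_a x_b / \lVert \mathbf{x} \rVert^2$. Since $\gamma$ is, up to a lower-order prefactor, $\mu^{r}$ with $r = \Theta(n)$, and $\mu$ is increasing in $\lambda$, a larger spectral radius accelerates the decay along the length-$r$ tail and thereby increases the ratio — contradicting the extremality of $G$. Hence the core is a clique, and combined with the previous paragraph $G = P_r \cdot K_s$ for some $r + s - 1 = n$ (a final switching step removes any superfluous edge joining the path to the clique). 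As the theorem asserts only that the extremizer is a kite, I would not need to determine the optimal split, though the first step already locates it at $s = \Theta(n/\log n)$.

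The hard part will be the perturbation analysis underlying the ``straightening'' and ``completion'' steps: certifying that a local modification changes the \emph{ratio} $x_{\max}/x_{\min}$ in the claimed direction, not merely the spectral radius. Adding or rerouting an edge raises $\lambda$ but simultaneously reshapes and renormalizes the entire eigenvector, so each structural claim requires quantitatively comparing the induced change in $x_{\max}$ against that in $x_{\min}$. The gain available from a single switch can be as small as a factor $1 + O((\log n)^2/n)$, so the comparison must be carried out with enough precision to guarantee strict monotonicity; this is exactly where the bound $\log \gamma = \Omega(n \log n)$, and the resulting $\lambda \to \infty$, provide the geometric separation between consecutive path entries that makes the inequalities go through.
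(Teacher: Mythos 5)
Your roadmap matches the paper's strategy in outline (long pendant tail, dense core, monotonicity of the decay rate $\sigma=(\lambda+\sqrt{\lambda^2-4})/2$, Rayleigh-quotient perturbation), and your Step 1 together with the location $s=\Theta(n/\log n)$ is fine. But the two steps you defer --- ``straightening'' and the quantitative perturbation analysis --- are precisely the content of the proof, and your sketch supplies no mechanism for them; worse, the directional heuristic you lean on (``larger $\lambda$ accelerates the decay and thereby increases the ratio'') is false in general and points the wrong way at the junction. In the actual extremal analysis the vertices $x_{k-2}$ and $x_{k-1}$ adjacent to the core are handled by showing that \emph{deleting} their extra edges increases $\gamma$: removing $c$ edges at $x_{k-1}$ lowers $\lambda_1$ by only $O\bigl(c^2/((n-k)\lambda_1)\bigr)$ but lowers the attachment entry $x_{k-1}$ by $\Theta(c/\lambda_1)$, and since $\gamma(G)=\frac{\sigma^{k-1}-\sigma^{-k+1}}{\sigma-\sigma^{-1}}\,x_{k-1}^{-1}$ holds \emph{exactly} once the pendant path is established, the second effect wins. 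So maximality forces some edges out, not in, and one needs a two-sided quantitative criterion (the paper's Lemma~\ref{change}: $\gamma$ increases when $\delta_1>4\delta_2/n$, decreases when $\delta_1\exp(2\delta_1\lambda_1\log n)<\delta_2/3n$, where $\delta_1,\delta_2$ are the relative changes in $\lambda_1$ and in the attachment entry). You acknowledge this comparison is ``the hard part'' but do not carry it out, and there is a case you would additionally have to treat: after adding an edge at $x_{k-1}$ the maximizing vertex can migrate from $x_k$ to $x_{k-1}$, which changes the formula for $\gamma$ and needs a separate argument (Case 2 of Lemma~\ref{k_1_pre_lemma}, which in turn uses the neighborhood-sum bound of Lemma~\ref{large_nbds}).

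You also miss the structural shortcut that makes your entire ``efficiency/straightening'' step unnecessary: take $x_1,\dots,x_k$ to be a \emph{shortest} path from the minimizing to the maximizing vertex. Since $x_k=1$ is maximal, comparison with a kite gives $\lambda_1(G)>n-k$, whence $\lambda_1=\sum_{y\sim x_k}y\le|N(x_k)|$ forces $|N(x_k)|=n-k+1$, i.e.\ $x_k$ is adjacent to everything off the path; and any off-path neighbor of $x_1,\dots,x_{k-3}$ would shortcut the path. Thus the tail is a pendant path essentially for free --- no switching, no comparison of tail shapes. Your global claim that ``concentrating the budget into one path beats branching'' would instead require comparing Perron vectors of structurally different graphs, which is harder than anything in the paper and is unsubstantiated in your sketch. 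Your completion step is salvageable, but only after $\deg(x_{k-1})=2$ is proved: then $\gamma(G)=f(\sigma)$ exactly for the increasing function $f(x)=(x^k-x^{-k})/(x-x^{-1})$, and comparing $G$ directly with $P_k\cdot K_{n-k+1}$ (strictly more edges, hence larger $\lambda_1$, hence larger $\sigma$) finishes with no eigenvector perturbation at all; the ``lower-order prefactor'' you wave away is exactly the gap that this exact formula closes.
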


We note that Brightwell and Winkler \cite{BrightwellWinkler1990} showed that
a kite graph maximizes the expected hitting time of a random walk.
Other irregularity measures for graphs have been well--studied.
Bell \cite{Bell1992} studied the irregularity measure $\epsilon(G) := \lambda_1(G) - \bar{d}(G)$,
the difference between the spectral radius and the average degree of $G$.
He determined the extremal graph over all (not necessarily connected)
graphs on $n$ vertices and $e$ edges.  It is not known what the extremal
connected graph is, and Aouchiche et al \cite{Aouchiche2008} conjectured that this extremal
graph is a `pineapple': a complete graph with pendant vertices added
to a single vertex.  Bell also studied the
\textit{variance} of a graph,
 \[var(G) = \frac{1}{n} \sum_{v\in V(G)} \left| d_v - \bar{d} \right|^2 . \]
 Albertson \cite{Albertson1997} defined a measure of irregularity by 
 \[
 \sum_{uv\in E(G)} |d(u) - d(v)|
 \]
 and the extremal graphs were characterized by Hansen and M\'elot \cite{HansenMelot2002}. 
 
Nikiforov \cite{Nikiforov2006} proved several inequalities comparing
$var(G)$, $\epsilon(G)$ and $s(G) := \sum_v |d(u) - \bar{d}|$.  
Bell showed that $\epsilon(G)$ and $var(G)$ are incomparable in general
\cite{Bell1992}.  Finally, bounds on $\gamma(G)$ have been given in
\cite{CioabaGregory2007, PapendieckRecht2000, Minc1970, Latham1995, Zhang2005}.

\section{Preliminaries}

Throughout this paper $G$ will be a connected simple graph on $n$ vertices.
The eigenvectors and eigenvalues of $G$ are those of the adjacency
matrix $A$ of $G$.  The vector $v$ will be the eigenvector corresponding
to the largest eigenvalue $\lambda_1$,  and we take $v$ to be scaled
so that its largest entry is $1$.  Let $x_1$ and $x_k$
be the vertices with smallest and largest eigenvector entries respectively, and
if several such vertices exist then we pick any of them arbitrarily.
Let $x_1, x_2, \cdots, x_k$ be a shortest path between $x_1$ and
$x_k$.  Let $\gamma(G)$ be the principal ratio of $G$.  We will abuse
notation so that for any vertex $x$, the symbol $x$ will refer also to $v(x)$,
the value of the eigenvector entry of $x$.  For example, with this notation
the eigenvector equation becomes

 \[ \lambda v = \sum_{w \sim v} w. \]

\noindent We will make use of the Rayleigh quotient characterization of the
largest eigenvalue of a graph,

\begin{equation}\label{rayleigh}
  \lambda_1(G) = \max_{0 \neq v} \frac{v^T A(G) v}{v^t v} 
\end{equation}

Recall that the vertices $v_1, v_2, \cdots, v_m$ are a 
\textit{pendant path} if the induced graph on these vertices
is a path and furthermore if, in $G$, $v_1$ has degree $1$ and
the vertices $v_2, \cdots, v_{m-1}$ have degree $2$
(note there is no requirement on the degree of $v_m$).

\begin{lemma}\label{path_bound}
  If $\lambda_1 \geq 2$ and $\sigma = (\lambda_1 + \sqrt{\lambda_1^2 - 4})/2$, then for
  $1 \leq j \leq k$,
   \[ \gamma(G) \leq \frac{\sigma^j - \sigma^{-j}}{\sigma - \sigma^{-1}} x_j^{-1}. \]
Moreover we have equality if the vertices $x_1, x_2, \cdots, x_{j}$ are a pendant path.
  
\end{lemma}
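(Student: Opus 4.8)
The plan is to reformulate the claimed inequality as a statement about how fast the eigenvector can grow along the shortest path $x_1, \dots, x_k$, and then to prove it by a first-order induction. Since the eigenvector is scaled so that its maximum entry equals $1$ and $x_1$ is the minimum entry, we have $\gamma(G) = 1/x_1$, so the asserted bound $\gamma(G) \le \frac{\sigma^j - \sigma^{-j}}{\sigma - \sigma^{-1}}x_j^{-1}$ is equivalent to $x_j \le U_j\, x_1$, where I write $U_i := \frac{\sigma^i - \sigma^{-i}}{\sigma - \sigma^{-1}}$. The numbers $\sigma, \sigma^{-1}$ are the two roots of $t^2 - \lambda_1 t + 1$, so $\sigma + \sigma^{-1} = \lambda_1$ and $U_i$ obeys $U_0 = 0$, $U_1 = 1$, and the Chebyshev-type recurrence $U_{i+1} = \lambda_1 U_i - U_{i-1}$; a short computation also gives the factored identity $U_{i+1} = \sigma^{i} + \sigma^{-1}U_i$, which is what will make the induction work.

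First I would extract two inequalities from the eigenvector equation $\lambda_1 v = \sum_{w \sim v} w$ together with the fact that $x_1, \dots, x_k$ is a shortest path, so that each interior $x_i$ is adjacent to both $x_{i-1}$ and $x_{i+1}$ and to no nearer vertex of the path. Dropping the remaining neighbor contributions, which are positive by Perron--Frobenius, yields $x_2 \le \lambda_1 x_1$ at the endpoint and $x_{i+1} \le \lambda_1 x_i - x_{i-1}$ for $2 \le i \le j-1$.

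The naive attempt to feed $x_{i+1} \le \lambda_1 x_i - x_{i-1}$ into a two-term induction for $x_i \le U_i x_1$ fails because of the minus sign, and I expect this to be the main obstacle. To get around it I would pass to the differences $e_i := x_{i+1} - \sigma^{-1}x_i$. Rewriting $x_{i+1} \le (\sigma + \sigma^{-1})x_i - x_{i-1}$ gives the first-order inequality $e_i \le \sigma\, e_{i-1}$, while the endpoint bound gives $e_1 \le \sigma x_1$; hence $e_i \le \sigma^{i} x_1$ for all relevant $i$. Now a clean induction closes the argument: assuming $x_i \le U_i x_1$, we obtain $x_{i+1} = e_i + \sigma^{-1}x_i \le (\sigma^{i} + \sigma^{-1}U_i)x_1 = U_{i+1}x_1$ by the factored identity, establishing $x_j \le U_j x_1$.

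Finally, for the equality clause, if $x_1, \dots, x_j$ is a pendant path then $x_1$ has degree $1$ and $x_2, \dots, x_{j-1}$ have degree $2$, so there are no dropped neighbor terms and every inequality above becomes an equality. The recurrence then holds exactly, giving $x_i = U_i x_1$ for each $i \le j$ and therefore $\gamma(G) = U_j x_j^{-1}$.
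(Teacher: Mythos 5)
Your proof is correct. It starts from exactly the same input as the paper --- the chain of eigenvector inequalities $\lambda_1 x_1 \geq x_2$ and $\lambda_1 x_i \geq x_{i-1} + x_{i+1}$ along the induced shortest path, with $\gamma(G) = 1/x_1$ --- but it processes that chain by a genuinely different mechanism. The paper runs a ratio-form (continued-fraction-style) induction, proving $x_i \geq (u_{i-1}/u_i)\,x_{i+1}$ where the coefficients satisfy $u_{i+1} = \lambda_1 u_i - u_{i-1}$, and then telescopes the product $x_1 \geq \frac{u_0}{u_1}\cdot\frac{u_1}{u_2}\cdots x_j/u_{j-1}$; each step there requires dividing by $\lambda_1 u_i - u_{i-1}$, so the paper must separately justify positivity of these quantities, which it does via the closed form $u_i = (\sigma^{i+1}-\sigma^{-i-1})/(\sigma-\sigma^{-1})$ after a somewhat implicit appeal to positivity of the eigenvector entries. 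You instead factor the second-order recurrence through its characteristic roots, passing to $e_i = x_{i+1} - \sigma^{-1}x_i$, so that the two-term inequality collapses to the first-order inequality $e_i \leq \sigma e_{i-1}$; since you only ever multiply inequalities by the positive constant $\sigma$, you need no sign information about the intermediate quantities $e_i$, and no division occurs at all. Your diagnosis of why the naive two-term induction on $x_i \leq U_i x_1$ fails (the minus sign demands a lower bound on $x_{i-1}$ that upper-bound induction cannot supply) is accurate, your identity $U_{i+1} = \sigma^i + \sigma^{-1}U_i$ is easily verified from $U_i = (\sigma^i - \sigma^{-i})/(\sigma - \sigma^{-1})$, and your treatment of the equality clause (a pendant path leaves no dropped neighbor terms, so every inequality is an equality) matches the paper's. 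Net effect: both proofs extract the same Chebyshev-type quantity from the same eigenvector inequalities, but your operator-factorization route is arguably cleaner, as it sidesteps the positivity and division bookkeeping that the paper's ratio recursion requires.
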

\begin{proof}
  We have the following system of inequalities
   \begin{eqnarray*}
     \lambda_1 x_1 & \geq & x_2 \\
     \lambda_1 x_2 & \geq & x_1 + x_3 \\
     \lambda_1 x_3 & \geq & x_2 + x_4 \\
     \vdots & & \vdots\\
     \lambda_1 x_{j-1} & \geq & x_j + x_{j-2}
   \end{eqnarray*}
 The first inequality implies that
  \[ x_1 \geq \frac{1}{\lambda_1} x_2\]
 Plugging this into the second equation and rearranging gives
  \[ x_2 \geq \frac{\lambda_1}{\lambda_1^2 - 1} x_3 \]
 Now assume that
  \[ x_i \geq \frac{u_{i-1}}{u_i} x_{i+1}. \]
 with $u_j$ positive for all $j<i$.  Then
  \[ \lambda_1 x_{i+1} \geq x_{i} + x_{i+2} \]
 implies that
  \[ x_{i+1} \geq \frac{u_i}{\lambda_1 u_i - u_{i-1}} x_{i+2}. \]
 where $\lambda_1 u_i - u_{i-1}$ must be positive because  $x_j$
 is positive for all $j$.
 Therefore the coefficients $u_i$ satisfy the recurrence
  \[ u_{i+1} = \lambda_1 u_i - u_{i-1}\]
 Solving this and using the initial conditions $u_0 = 1$,
 $u_1 = \lambda$ we get
  \[ u_i = \frac{\sigma^{i+1} - \sigma^{-i-1}}{\sigma - \sigma^{-1}} \]
 In particular, $u_i$ is always positive, a fact implicitly
 used above.  Finally this gives,
  \[ x_1 \geq \frac{u_0}{u_1} x_2 \geq \frac{u_0}{u_1} \cdot \frac{u_1}{u_2} x_3 \geq \cdots \geq \frac{x_j}{u_{j-1}} \]
 Hence
  \[ \gamma(G) = \frac{x_k}{x_1} = \frac{1}{x_1} \leq \frac{\sigma^j - \sigma^{-j}}{\sigma - \sigma^{-1}} x_j^{-1} \]
 If these vertices are a pendant path, then we have equality throughout.
\end{proof}

We will also use the following lemma which comes from the
paper of Cioab\u{a} and Gregory \cite{CioabaGregory2007}.

\begin{lemma}\label{kite_lambda}
  For $r \geq 2$ and $s \geq 3$,
   \[ s - 1 + \frac{1}{s(s-1)} < \lambda_1(P_r \cdot K_s) < s - 1 + \frac{1}{(s-1)^2} . \]
\end{lemma}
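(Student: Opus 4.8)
The plan is to use the symmetry of the clique to reduce the computation of $\lambda_1 := \lambda_1(P_r\cdot K_s)$ to a single scalar equation, and then to pinch the resulting quantity between the two stated bounds. Throughout set $\delta := \lambda_1 - (s-1)$; since $K_s$ is a proper connected subgraph of $P_r\cdot K_s$, we have $\lambda_1 > \lambda_1(K_s) = s-1$, so $\delta > 0$, and in particular $\lambda_1 > 2$, so that $\sigma = (\lambda_1 + \sqrt{\lambda_1^2-4})/2$ from Lemma~\ref{path_bound} is real with $\sigma > 1$.

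First I would record the eigenvector. Let $z_0$ be the vertex where the path meets the clique, let $z_0, z_1, \dots, z_{r-1}$ be the path (so $z_{r-1}$ is the degree-one endpoint), and let $a$ be the common value of the principal eigenvector on the $s-1$ clique vertices distinct from $z_0$; these values are equal because the vertices are interchanged by automorphisms and the eigenvector is positive. Applying $\lambda_1 v=\sum_{w\sim v} w$ at a clique vertex other than $z_0$ gives $z_0=(\lambda_1-s+2)a=(1+\delta)a$, and at $z_0$ it gives $z_1=\lambda_1 z_0-(s-1)a$. Eliminating $a$ yields the clique-side identity
\[ \frac{z_1}{z_0}=\lambda_1-\frac{s-1}{1+\delta}=\frac{\delta(s+\delta)}{1+\delta}. \]

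Next I would treat the path, where the eigenvector equation is the recurrence $z_{i+1}=\lambda_1 z_i-z_{i-1}$. This holds for $1\le i\le r-1$ once we encode the pendant condition $\lambda_1 z_{r-1}=z_{r-2}$ by introducing the virtual value $z_r:=0$. The positive solution is $z_i=C(\sigma^{r-i}-\sigma^{-(r-i)})$, so writing $h(m)=\sigma^m-\sigma^{-m}$,
\[ \rho_r:=\frac{z_1}{z_0}=\frac{h(r-1)}{h(r)}. \]
The identities $h(r)-\sigma h(r-1)=\sigma^{-r}(\sigma^2-1)>0$ and $h(r)^2-h(r-1)h(r+1)=(\sigma-\sigma^{-1})^2>0$ show that $\rho_r$ is increasing in $r$, with $\rho_2=1/\lambda_1$ and $\rho_r<1/\sigma$. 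Combining this with the clique-side identity, every $r\ge 2$ satisfies
\[ \frac{1}{\lambda_1}\le\frac{\delta(s+\delta)}{1+\delta}<\frac{1}{\sigma}. \]

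Finally I would extract the two bounds, using that $\delta\mapsto \delta(s+\delta)/(1+\delta)$ is strictly increasing on $\delta>0$ while $1/\lambda_1=1/(s-1+\delta)$ and $1/\sigma$ are decreasing in $\delta$. For the lower bound, the left inequality rearranges to $\phi(\delta):=\delta(s+\delta)(s-1+\delta)-(1+\delta)\ge 0$; since $\phi'(\delta)\ge s(s-1)-1>0$ and substitution gives $\phi(\delta_0)=\delta_0[(2s-1)\delta_0+\delta_0^2-1]<0$ at $\delta_0=\tfrac{1}{s(s-1)}$ for $s\ge 3$ (because $(2s-1)/(s(s-1))<1$), monotonicity forces $\delta>\tfrac{1}{s(s-1)}$. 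For the upper bound, put $\delta^*=1/(s-1)^2$, $\lambda^*=s-1+\delta^*$ and $c=\delta^*(s+\delta^*)/(1+\delta^*)$; because $1/\sigma$ is the smaller root of $t^2-\lambda_1 t+1$, the claim $\delta<\delta^*$ follows once we check $c^2-\lambda^*c+1<0$ (which places $c$ above $1/\sigma$ at $\lambda^*$), a routine polynomial inequality in $s$. The conceptual heart of the argument is the two-sided control $1/\lambda_1\le\rho_r<1/\sigma$; the main care needed afterward is numerical, since the bounds are nearly tight (for $s=3$ the true value of $\delta$ is about $0.24$ against the target upper bound $0.25$).
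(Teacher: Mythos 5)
Your proposal is correct, but there is nothing in the paper to compare it against: the paper does not prove Lemma~\ref{kite_lambda} at all, it imports the statement from Cioab\u{a} and Gregory \cite{CioabaGregory2007}. So you have supplied a self-contained proof where the paper has none. Your route is a natural companion to the paper's Lemma~\ref{path_bound}: you solve the same three-term recurrence $z_{i+1}=\lambda_1 z_i - z_{i-1}$ with characteristic roots $\sigma^{\pm 1}$, encode the pendant condition via $z_r=0$, and then match the path-side ratio $h(r-1)/h(r)$ against the clique-side ratio $\delta(s+\delta)/(1+\delta)$ obtained from symmetry of the principal eigenvector on the clique (valid since the eigenvector is automorphism-invariant by Perron--Frobenius). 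The key identities you use are right: $h(r)-\sigma h(r-1)=\sigma^{-r}(\sigma^2-1)>0$ gives $\rho_r<1/\sigma$, and $h(r)^2-h(r-1)h(r+1)=(\sigma-\sigma^{-1})^2>0$ gives monotonicity in $r$, so the sandwich $1/\lambda_1\le \delta(s+\delta)/(1+\delta)<1/\sigma$ holds uniformly in $r\ge 2$, and the subsequent monotonicity logic (the clique-side expression increasing in $\delta$, $1/\lambda_1$ and $1/\sigma$ decreasing) is sound, including strictness of the lower bound at $r=2$ since $\phi(\delta_0)<0$ strictly. Two steps should be written out rather than waved at. First, in the lower bound, citing only $(2s-1)/(s(s-1))<1$ is not quite the full claim; you need $(2s-1+\delta_0)\delta_0<1$, which holds because $2s-1+\delta_0<s(s-1)$ for $s\ge 3$ (equivalently $s^2-3s+1>\delta_0$, and the left side is at least $1$ while $\delta_0\le 1/6$). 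Second, the deferred ``routine polynomial inequality'' is genuinely the crux, since the upper bound is nearly tight at $s=3$, and it does check out: setting $t=s-1\ge 2$, so $\delta^*=1/t^2$, one computes $c=(t^3+t^2+1)/\bigl(t^2(t^2+1)\bigr)$ and $\lambda^*=(t^3+1)/t^2$, whence
\[ c^2-\lambda^* c+1 \;=\; -\,\frac{t(t-1)^2-1}{(t^2+1)^2} \;<\;0 \qquad (t\ge 2), \]
since $t(t-1)^2\ge 2$. With those two computations filled in, your argument is a complete and correct proof of the lemma for all $r\ge 2$, $s\ge 3$, and is arguably a useful addition given that the paper itself leaves the lemma unproved.
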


In the remainder of the paper we prove Theorem~\ref{main_theorem}.
We now give a sketch of the proof that is contained in Section~\ref{sec_proof}.

\begin{enumerate}
 \item We show that the vertices $x_1, x_2, \cdots, x_{k-2}$ are
   a pendant path and that $x_k$ is connected to all of the vertices
   in $G$ that are not on this path (lemma~\ref{connect_every}).
 \item Next we prove that the length of the path is approximately
   $n - n/\log(n)$ (lemma~\ref{s_range}).
 \item We show that $x_{k-2}$ has degree exactly $2$ (lemma~\ref{k_2_lemma}), which
   extends our pendant path to $x_1, x_2, \cdots, x_{k-1}$.
   To do this, we find conditions under which adding or deleting
   edges increases the principal ratio (lemma~\ref{change}).
 \item Next we show that $x_{k-1}$ also has degree exactly $2$ (lemma~\ref{k_1_lemma}).
   At this point we can deduce that our extremal graph is either
   a kite graph or a graph obtained from a kite graph
   by removing some edges from the clique.  We show that
   adding in any missing edges will increase the principal ratio,
   and hence the extremal graph is exactly a kite graph.
   
\end{enumerate}

\section{Proof of Theorem~\ref{main_theorem}}\label{sec_proof}

Let $G$ be the graph with maximal principal ratio among all connected
graphs on $n$ vertices, and let $k$ be the number of vertices in a
shortest path between the vertices with smallest and largest eigenvalue
entries. As above, let $x_1,\cdots, x_k$ be the vertices of the shortest path, where $\gamma(G) = x_k / x_1$.  Let $C$ be the set of vertices not on this shortest
path, so $|C| = n-k$.  Note that there is no graph with $n-k=1$, as the endpoints of a path have the same principal eigenvector entry.  Also
$\lambda_1(G) \geq 2$, otherwise $P_{n-2} \cdot K_3$ would have larger
principal ratio.  Finally note that $k$ is strictly larger than $1$,
otherwise $x_k = x_1$ and $G$ would be regular.

\begin{lemma}\label{max_lambda}
  $\lambda_1(G) > n-k$.
\end{lemma}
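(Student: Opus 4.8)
The plan is to prove the bound directly, without contradiction, by comparing $G$ against a single well-chosen kite and invoking Lemma~\ref{path_bound} with the \emph{same} index $j=k$ for both graphs. Concretely, I would set $H = P_{k}\cdot K_{n-k+1}$, which has exactly $k + (n-k+1) - 1 = n$ vertices (we may assume $n-k \geq 2$, the cases $n-k\in\{0,1\}$ being degenerate or excluded). Since $H$ contains $K_{n-k+1}$ as a proper subgraph and is connected, the standard strict monotonicity of the spectral radius gives $\lambda_1(H) > \lambda_1(K_{n-k+1}) = n-k$. Thus it suffices to show $\lambda_1(G) \geq \lambda_1(H)$, and since $G$ is extremal we already know $\gamma(G) \geq \gamma(H)$; the real task is to convert this ratio inequality into an eigenvalue inequality.

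First I would bound $\gamma(G)$ from above: applying Lemma~\ref{path_bound} with $j = k$ and using the normalization $x_k = 1$ makes the boundary factor $x_k^{-1}$ disappear, giving $\gamma(G) \leq \frac{\sigma_G^{k} - \sigma_G^{-k}}{\sigma_G - \sigma_G^{-1}}$, where $\sigma_G$ is the quantity $\sigma$ attached to $\lambda_1(G)$. Next I would bound $\gamma(H)$ from below. In $H$ the minimum eigenvector entry sits at the pendant end of the path and the maximum at a non-junction clique vertex, so the shortest min-to-max path has $k+1$ vertices whose first $k$ form a pendant path. Hence the equality clause of Lemma~\ref{path_bound} applies to $H$ with $j=k$ and yields $\gamma(H) = \frac{\sigma_H^{k} - \sigma_H^{-k}}{\sigma_H - \sigma_H^{-1}}\, y^{-1}$, where $y \leq 1$ is the entry of the junction vertex; since $y^{-1}\geq 1$ this gives $\gamma(H) \geq \frac{\sigma_H^{k} - \sigma_H^{-k}}{\sigma_H - \sigma_H^{-1}}$.

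Chaining these with extremality produces $\frac{\sigma_G^{k} - \sigma_G^{-k}}{\sigma_G - \sigma_G^{-1}} \geq \gamma(G) \geq \gamma(H) \geq \frac{\sigma_H^{k} - \sigma_H^{-k}}{\sigma_H - \sigma_H^{-1}}$. Writing $\sigma = e^{t}$ with $t\geq 0$, the function $\frac{\sigma^{k} - \sigma^{-k}}{\sigma - \sigma^{-1}} = \sum_{i=0}^{k-1}\sigma^{k-1-2i}$ is increasing in $\sigma\geq 1$ (pairing $\sigma^{m}+\sigma^{-m}$ shows each pair is nondecreasing), so the displayed inequality forces $\sigma_G \geq \sigma_H$, which is equivalent to $\lambda_1(G) \geq \lambda_1(H) > n-k$. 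The main point to get right is the choice of comparison graph: taking the clique to have $n-k+1$ vertices is what forces $\lambda_1(H) > n-k$, while giving the path exactly $k$ vertices (one more than looks natural) is what lets both graphs be measured by the same geometric factor of index $k$, so that the boundary terms point the favorable way — equal to $1$ for $G$ and at least $1$ for $H$. The only routine checks are the monotonicity of that factor and that $\lambda_1(H)\geq 2$, so that Lemma~\ref{path_bound} is applicable to $H$.
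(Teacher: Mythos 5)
Your proof is correct and is essentially the paper's own argument: both compare $G$ to $H = P_k \cdot K_{n-k+1}$, apply Lemma~\ref{path_bound} with the same index $j=k$ to sandwich $\gamma(G)$ and $\gamma(H)$ by the increasing function $f(\sigma) = (\sigma^k - \sigma^{-k})/(\sigma - \sigma^{-1})$, and conclude $\sigma_G \geq \sigma_H$, hence $\lambda_1(G) \geq \lambda_1(H) > n-k$. One harmless slip: in $H$ the maximum eigenvector entry is attained at the junction vertex of degree $n-k+1$, not at a non-junction clique vertex (a non-junction clique vertex has entry $w/\bigl(\lambda_1(H) - (n-k-1)\bigr) < w$, where $w$ is the junction entry), so the min-to-max path has exactly $k$ vertices and the paper gets the equality $\gamma(H) = f(\sigma_H)$ where you get only $\gamma(H) \geq f(\sigma_H)$ --- but your inequality points the right way, so the chain of estimates goes through unchanged.
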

\begin{proof}
  Let $H$ be the graph $P_k \cdot K_{n-k+1}$. It is straightforward to see that in $H$, the smallest entry of the principal eigenvector is the vertex of degree $1$ and the largest is the vertex of degree $n-k+1$. Also note that in $H$, the vertices on the path $P_k$ form a pendant path.
  By maximality we know that $\gamma(G) \geq \gamma(H)$.
  Combining this with lemma~\ref{path_bound}, we get
   \[ \frac{\sigma^k - \sigma^{-k}}{\sigma - \sigma^{-1}} \geq \gamma(G) \geq \gamma(H) =  \frac{\sigma_H^{k} - \sigma_H^{-k}}{\sigma_H - \sigma_H^{-1}} \]
  where $\sigma_H = \left(\lambda_1(H) + \sqrt{\lambda_1(H)^2 - 4}\right) /2$.

  \noindent Now the function
   \[ f(x) = \frac{x^k - x^{-k}}{x - x^{-1}}\]
  is increasing when $x \geq 1$.
  Hence we have
  $\sigma \geq \sigma_H$, and so
  $\lambda_1(G) \geq \lambda_1(H) > n-k$.
\end{proof}

\begin{lemma}\label{connect_every}
  $x_1, x_2, \cdots, x_{k-2}$ are a pendant path in $G$, and $x_k$
  is connected to every vertex in $G$ that is not on this
  path.
\end{lemma}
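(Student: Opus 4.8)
The plan is to prove the two claims in reverse order: first establish that $x_k$ is adjacent to every vertex of $C$, and then deduce the pendant path from the shortest-path hypothesis together with that adjacency.

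For the adjacency claim I would argue by a degree count at $x_k$. Since $v$ is scaled so that $v(x_k) = 1$ is the largest entry, the eigenvector equation at $x_k$ reads $\lambda_1 = \lambda_1 v(x_k) = \sum_{w \sim x_k} v(w) \le \deg(x_k)$, because every entry is at most $1$. By Lemma~\ref{max_lambda} we have $\lambda_1 > n-k$, so $\deg(x_k) \ge n-k+1$. On the other hand, because $x_1, \dots, x_k$ is a shortest path, $x_k$ can be adjacent to no $x_j$ with $j \le k-2$ (such an edge would give a path from $x_1$ to $x_k$ of length at most $k-2$), so the only possible neighbours of $x_k$ are $x_{k-1}$ together with the $n-k$ vertices of $C$. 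Hence $\deg(x_k) \le n-k+1$, and equality is forced: $x_k$ is adjacent to $x_{k-1}$ and to every vertex of $C$, which is the second assertion of the lemma.

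With this in hand the pendant-path claim is purely combinatorial. Suppose some $x_i$ with $i \le k-3$ had a neighbour $c \in C$. Since $c$ is adjacent to $x_k$ by the previous step, the walk $x_1, x_2, \dots, x_i, c, x_k$ runs from $x_1$ to $x_k$ and has length $i+1 \le k-2$, contradicting the fact that the shortest such path has length $k-1$. Therefore none of $x_1, \dots, x_{k-3}$ has a neighbour in $C$. Combined with the shortest-path hypothesis, which forbids chords among the $x_j$, this shows $x_1$ has degree $1$ and $x_2, \dots, x_{k-3}$ have degree $2$; this is exactly the statement that $x_1, \dots, x_{k-2}$ form a pendant path, with no constraint imposed on the last vertex $x_{k-2}$.

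The only real content lies in the first step, whose crux is Lemma~\ref{max_lambda}: the bound $\lambda_1 > n-k$ is precisely what pins down $\deg(x_k)$, since without it the inequality $\deg(x_k) \ge \lambda_1$ would not force maximum degree. I do not anticipate a genuine obstacle beyond this, as once the spectral radius is known to exceed $n-k$ both the degree count and the distance argument are elementary; the whole difficulty has been front-loaded into securing the lower bound on $\lambda_1$, which the preceding lemma supplies.
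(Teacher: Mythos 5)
Your proposal is correct and follows essentially the same route as the paper's proof: the eigenvector equation at $x_k$ combined with Lemma~\ref{max_lambda} forces $\deg(x_k) \geq n-k+1$, the shortest-path (induced) hypothesis caps the degree at $n-k+1$ so that $N(x_k) = C \cup \{x_{k-1}\}$, and any neighbor of $x_i$ ($i \leq k-3$) in $C$ would yield a shorter $x_1$--$x_k$ path. Your write-up is in fact slightly more explicit than the paper's at the final step, spelling out why an off-path neighbor shortcuts through $x_k$.
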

\begin{proof}
  By our choice of scaling, $x_k = 1$.  From lemma~\ref{max_lambda}
   \[ n-k < \lambda_1(G) = \sum_{y \sim  x_k} y \leq |N(x_k)|. \]
  Now $|N(x_k)|$ is an integer, so we have $|N(x_k)| \geq n-k+1$.
  Moreover because $x_1, x_2, \cdots, x_k$ is an induced path, we
  must have that $|N(x_k)| = n-k+1$ exactly, and hence the
  $N(x_k) = C \cup \{ x_{k-1} \}$.  It follows that $x_1, x_2, \cdots, x_{k-3}$
  have no neighbors off the path, as otherwise there would be a shorter
  path between $x_1$ and $x_k$.
\end{proof}

\begin{lemma}\label{s_range}
For the extremal graph $G$, we have $n-k = (1+o(1))\frac{n}{\log n}$.
\end{lemma}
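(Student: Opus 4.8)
The plan is to show that $\log\gamma(G)$ equals, up to an additive error of $o(n)$, the value at $G$ of the function $\psi(t) := (n-t-1)\log t$, where $t := n-k = |C|$; since $\psi$ is strictly concave and maximized at $t_{\max} = (1+o(1))\tfrac{n}{\log n}$, sandwiching $\psi(t)$ within $o(n)$ of its maximum will confine $t$ to $(1+o(1))\tfrac{n}{\log n}$. The first step is to locate $\lambda_1$ precisely. Lemma~\ref{max_lambda} gives $\lambda_1 > n-k$, while the eigenvector equation at $x_k$, combined with Lemma~\ref{connect_every}, gives
\[
\lambda_1 = \lambda_1 x_k = \sum_{y\sim x_k} y \le |N(x_k)| = n-k+1,
\]
since $x_k=1$ is the maximum entry. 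Thus $n-k < \lambda_1 \le n-k+1$, and from $\sigma+\sigma^{-1}=\lambda_1$ with $\sigma\ge 1$ we get $n-k-1 < \sigma \le n-k+1$; in particular $\sigma=(1+o(1))t$ as soon as we know $t\to\infty$.

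For the upper bound, Lemma~\ref{path_bound} with $j=k$ (and $x_k=1$) gives
\[
\gamma(G) \le \frac{\sigma^k-\sigma^{-k}}{\sigma-\sigma^{-1}} = \sum_{i=0}^{k-1}\sigma^{k-1-2i} \le k\,\sigma^{k-1},
\]
which, together with $\sigma\le n-k+1$, yields the universally valid estimate $\log\gamma(G)\le (k-1)\log(n-k+1)+\log k$.

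For the lower bound I would compare $G$ to a near-optimal kite. Put $t^\star=\lceil n/\log n\rceil$ and $H^\star = P_{\,n-t^\star}\cdot K_{\,t^\star+1}$, with path length $k^\star=n-t^\star$. The path of $H^\star$ is a pendant path, so the equality case of Lemma~\ref{path_bound} applies, and since $\tfrac{\sigma^{k}-\sigma^{-k}}{\sigma-\sigma^{-1}}\ge \sigma^{k-1}$ we get $\gamma(H^\star)\ge \sigma_{H^\star}^{\,k^\star-1}$. Lemma~\ref{kite_lambda} gives $\lambda_1(H^\star)>t^\star$, hence $\sigma_{H^\star}>t^\star-1$, so $\log\gamma(H^\star)\ge (k^\star-1)\log(t^\star-1)=\psi(t^\star)-o(n)$. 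A direct calculation shows that the maximizer of $\psi$ solves $t(\log t+1)=n-1$, giving $t_{\max}=(1+o(1))\tfrac{n}{\log n}$ and $\psi(t^\star)=\max_t\psi - o(n)$; thus by extremality $\log\gamma(G)\ge \log\gamma(H^\star)\ge \max_t\psi - o(n) = (1+o(1))n\log n$. Comparing with the universal upper bound forces $\log(n-k+1)\to\infty$, i.e.\ $t\to\infty$, and then the upper bound sharpens to $\log\gamma(G)\le \psi(t)+o(n)$, because $\log k=o(n)$ and $(n-t-1)\log(1+1/t)=O(n/t)=o(n)$.

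Combining the two bounds gives $\psi(t)\ge \max_t\psi - o(n)$, and it remains to optimize. Here $\psi''(t)=-\tfrac1t-\tfrac{n-1}{t^2}<0$, so $\psi$ is strictly concave with $|\psi''|$ decreasing in $t$. An $o(n)$ deficit first rules out $t\notin[\tfrac12 t_{\max},2t_{\max}]$, since leaving this range already costs at least $\tfrac12|\psi''(2t_{\max})|\,t_{\max}^2=\Theta(n)$; on the remaining range $|\psi''|=\Theta((\log n)^2/n)$, so $\tfrac{c(\log n)^2}{n}(t-t_{\max})^2\le \psi(t_{\max})-\psi(t)=o(n)$, whence $|t-t_{\max}|=o(n/\log n)$ and $t=(1+o(1))t_{\max}=(1+o(1))\tfrac{n}{\log n}$. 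I expect this final optimization to be the crux: the window of near-maximizers of $\psi$ has width only $\Theta(\sqrt{n/\log n})$ about $t_{\max}$, so one must check that every error accrued in the spectral and kite estimates is genuinely $o(n)$ in order to pin $t$ down to within $o(n/\log n)$ of $t_{\max}$.
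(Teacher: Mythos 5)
Your proposal is correct and takes essentially the same route as the paper: both sandwich $\gamma(G)$ between the upper bound obtained from $\lambda_1(G)\le n-k+1$ (via Lemmas~\ref{connect_every} and~\ref{path_bound}) and the lower bound obtained by comparing $G$ with a kite whose clique has size roughly $n/\log n$ (via the equality case of Lemma~\ref{path_bound} and Lemma~\ref{kite_lambda}). The only difference is that you spell out the final optimization --- the concavity analysis of $\psi(t)=(n-t-1)\log t$ showing an $o(n)$ deficit pins $t$ to $(1+o(1))\,n/\log n$ --- which the paper compresses into its closing ``Thus.''
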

\begin{proof}
Let $H$ be the graph $P_j \cdot K_{n-j+1}$ where $ j = \left\lfloor n - \frac{n}{\log n}\right\rfloor$, and let $G$ be the connected graph on $n$ vertices with maximum principal ratio. Let $x_1,\cdots, x_k$ be a shortest path from $x_1$ to $x_k$ where $\gamma(G) = \frac{x_k}{x_1}$. By lemma \ref{connect_every}, we have
\[
\lambda_1(G) \leq \Delta(G) \leq n-k+1.
\]
By the eigenvector equation, this gives that
\begin{equation}\label{gamma of G}
\gamma(G) \leq (n-k+1)^k
\end{equation}
Now, lemma \ref{path_bound} gives that
\[
\gamma(H)  = \frac{\sigma_H^j - \sigma_H^{-j}}{\sigma_H - \sigma_H^{-1}},
\]
where
\[
\sigma(H) = \frac{\lambda_1(H) + \sqrt{\lambda_1(H)^2 -4}}{2}.
\]

Now, $s-1 + \frac{1}{s(s-1)} < \lambda_1(P_r\cdot K_s) < s-1 + \frac{1}{(s-1)^2}$, so we may choose $n$ large enough that $\frac{n}{\log n} + 1 >\sigma_H - \sigma_H^{-1} > \frac{n}{\log n}$. By maximality of $\gamma(G)$, we have
\[
(n-k+1)^k \geq \gamma(G) \geq \gamma(H) \geq \left(\frac{n}{\log n}\right)^{n-\frac{n}{\log n} - 2}.
\]
Thus, $n- k = (1+o(1))\frac{n}{\log n}$.
\end{proof}

For the remainder of this paper we will explore the structure of $G$ by
showing that if certain edges are missing, adding them would increase
the principal ratio, and so by maximality these edges must already be
present in $G$.  We have established that the vertices $x_1, x_2, \cdots, x_{k-2}$
are a pendant path, and so we have
\begin{equation}\label{pr_form}
 \gamma(G) = \frac{\sigma^{k-2} - \sigma^{-k+2}}{\sigma-\sigma^{-1}} \frac{1}{x_{k-2}}
\end{equation}
We will not add any edges that affect this path, and so the above equality will
remain true.   
The change in $\gamma$ is then completely determined by the change
in $\lambda_1$ and the change in $x_{k-2}$.  The next lemma gives conditions
on these two parameters under which $\gamma$ will increase or decrease.

\begin{lemma}\label{change}
Let $x_1, x_2, \cdots, x_{m-1}$ form a pendant path
  in $G$, where $n-m = (1+o(1)) n/\log(n)$.
  Let $G_+$ be a graph obtained from $G$ by adding some edges from $x_{m-1}$ to $V(G)\setminus \{x_1,\cdots, x_{m-1}\}$,
  where the addition of these edges does not affect which vertex
  has largest principal eigenvector entry.  Let
  $\lambda_1^+$ be the largest eigenvalue of $G_+$ with leading eigenvector
  entry for vertex $x$ denoted $x^+$, also normalized to have maximum
  entry one.  
  Define $\delta_1$ and $\delta_2$ such that
  $\lambda_1^+ = (1 + \delta_1) \lambda_1$ and
  $x_{m-1}^+ = (1 + \delta_2) x_{m-1}$.  Then 
  \begin{itemize}
     \item $\gamma(G_+) > \gamma(G)$ whenever $\delta_1 > 4 \delta_2 / n$
     \item $\gamma(G_+) < \gamma(G)$ whenever $\delta_1 \exp(2 \delta_1 \lambda_1 \log n ) <  \delta_2 / 3 n$.
  \end{itemize}
\end{lemma}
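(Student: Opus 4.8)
The key formula is \eqref{pr_form}, which expresses $\gamma(G)$ as a product of two factors: the path factor $(\sigma^{m-2} - \sigma^{-m+2})/(\sigma - \sigma^{-1})$ and the reciprocal factor $1/x_{m-1}$. (Here the pendant path has $m-1$ vertices, so the relevant exponent is $m-2$.) Since adding edges only at $x_{m-1}$ leaves the path structure intact, the ratio $\gamma(G_+)/\gamma(G)$ splits cleanly as

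$$\frac{\gamma(G_+)}{\gamma(G)} = \frac{g(\sigma^+)}{g(\sigma)} \cdot \frac{x_{m-1}}{x_{m-1}^+}, \qquad g(x) = \frac{x^{m-2}-x^{-(m-2)}}{x-x^{-1}}.$$

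Let me sketch the steps. First I would track the change in $\sigma$. Since $\lambda_1^+ = (1+\delta_1)\lambda_1$, I need to express $\sigma^+$ in terms of $\sigma$ and $\delta_1$; differentiating $\sigma = (\lambda + \sqrt{\lambda^2-4})/2$ shows $d\sigma/d\lambda = \sigma/\sqrt{\lambda^2-4} = \sigma/(\sigma-\sigma^{-1})$, so to first order $\sigma^+ \approx \sigma(1 + \delta_1\lambda/(\sigma-\sigma^{-1}))$. The second factor is immediate: $x_{m-1}/x_{m-1}^+ = 1/(1+\delta_2)$.

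**Estimating the path factor.** The main work is controlling $g(\sigma^+)/g(\sigma)$. For $\sigma$ bounded away from $1$ — which holds here since $\lambda_1 \approx n/\log n$ is large — the dominant term in $g$ is $\sigma^{m-2}/(\sigma-\sigma^{-1})$, so $\log g(x) \approx (m-2)\log x$ up to lower-order corrections. Thus $\log(g(\sigma^+)/g(\sigma)) \approx (m-2)\log(\sigma^+/\sigma) \approx (m-2)\,\delta_1\lambda/(\sigma-\sigma^{-1})$. Using $\sigma - \sigma^{-1} = \sqrt{\lambda^2-4} \approx \lambda$ for large $\lambda$, and $m-2 = n - (1+o(1))n/\log n \sim n$, this is roughly $n\delta_1$ (with a factor absorbing the $\lambda/(\sigma-\sigma^{-1})\approx 1$ term). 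I would make this rigorous by bounding $g$ above and below by constant multiples of $\sigma^{m-2}$ and by using $\log(1+t)$ estimates; the term $\exp(2\delta_1\lambda\log n)$ in the second bullet is exactly the crude upper envelope one gets for $(\sigma^+/\sigma)^{m-2}$ when $m-2 \le n$ and $\log(\sigma^+/\sigma) \le 2\delta_1\lambda\log n / n$ after accounting for $\sigma-\sigma^{-1}$.

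**Assembling the two directions.** For the first bullet, I combine the lower bound $g(\sigma^+)/g(\sigma) \ge 1 + c\,n\delta_1$ (some explicit constant) with $x_{m-1}/x_{m-1}^+ = 1/(1+\delta_2) \ge 1 - \delta_2$, so the product exceeds $1$ as soon as the gain $\sim n\delta_1$ dominates the loss $\sim\delta_2$; the stated threshold $\delta_1 > 4\delta_2/n$ is the clean sufficient condition after pinning down constants. For the second bullet I argue in reverse: the upper bound $g(\sigma^+)/g(\sigma) \le \exp(C\,\delta_1\lambda\log n)$ combined with $1/(1+\delta_2) \le 1 - \delta_2/3$ (valid for $\delta_2$ small) forces the product below $1$ precisely when $\delta_1\exp(2\delta_1\lambda\log n) < \delta_2/3n$. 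The main obstacle is making the approximation $\log g(\sigma^+)/g(\sigma) \approx (m-2)\log(\sigma^+/\sigma)$ quantitatively tight in both directions simultaneously, since we need a clean lower bound for the first case and a clean exponential upper bound for the second; the asymptotic regime $n-m = (1+o(1))n/\log n$ together with $\lambda_1 \asymp n/\log n$ is what lets the error terms be absorbed, so I would invoke \Cref{s_range} and \Cref{max_lambda} to fix the sizes of $m$, $\lambda_1$, and $\sigma-\sigma^{-1}$ throughout.
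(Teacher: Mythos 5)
Your proposal takes essentially the same route as the paper's proof: the same factorization of $\gamma$ into the path factor times $1/x_{m-1}$, the same first-order control of $\sigma_+$ in terms of $\delta_1$ (you differentiate $\sigma(\lambda)$ where the paper uses the Laurent expansion of $\sigma$ in $\lambda_1$, both yielding $(1+\delta_1/2)\sigma < \sigma_+ < (1+2\delta_1)\sigma$), a linearized lower bound on the path factor for the first bullet, and the envelope $\exp(2\delta_1\lambda_1\log n)$ for $(\sigma_+/\sigma)^{m}$ in the second. Two details to repair in the write-up: the exponent is $m-1$, not $m-2$ (lemma~\ref{path_bound} with $j=m-1$), and in the second bullet the pure multiplicative envelope $\exp(C\,\delta_1\lambda\log n)$ combined with $1-\delta_2/3$ does not by itself yield the stated threshold when $\delta_1\lambda\log n$ and $\delta_2$ are comparable and small (and $1/(1+\delta_2)\le 1-\delta_2/3$ fails for $\delta_2>2$); you need the mean-value form the paper uses, namely $f(\sigma_+)/f(\sigma) \le 1 + 3n\,\delta_1\exp(2\delta_1\lambda_1\log n)$ with the prefactor linear in $\delta_1$, obtained from $f'(x+\varepsilon) < 2n\,x^{m-3}\exp(2\log(n)\,\delta_1\lambda_1)$, after which comparing against $1+\delta_2$ gives exactly the claimed condition.
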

\begin{proof}
  We have
  \[\sigma = \lambda_1 - \lambda_1^{-1} - \lambda_1^{-3} - 2 \lambda_1^{-5} - \cdots - \frac{2}{2n-3} \binom{2n-2}{n} \lambda_1^{-(2n-1)}- \cdots\]
  So
   \[ \lambda_1^+ - \lambda_1 < \sigma_+ - \sigma < \lambda_1^+ - \lambda_1 - 2((\lambda_1^+)^{-1} - \lambda_1^{-1})\]
  when $\lambda_1$ is sufficiently large, which is guaranteed by lemma~\ref{s_range}.
  Plugging in $\lambda_1^+ = (1 + \delta_1) \lambda_1$, we get
   \[ \delta_1 \lambda_1 < \sigma_+ - \sigma < \delta_1 \lambda_1 + 2 \lambda_1^{-1}(1 - (1+\delta_1)^{-1}) < \delta_1 \lambda_1 + \delta_1\]
  In particular
   \[ (1+\delta_1/2) \sigma < \sigma_+ < (1+2 \delta_1) \sigma\]
  To prove part (i), we wish to find a lower bound in the change in the first factor of
  equation~\ref{pr_form}.  Let
   \[ f(x) = \frac{x^{m-1} - x^{-m+1}}{x-x^{-1}}. \]
  Then $2m x^{m-3} > f'(x) > (m-2) x^{m-3} - m x^{m-5}$, and using
  that $n-m \sim  n/\log(n)$ and $\sigma \sim \lambda_1$ which goes to infinity with $n$,
  we get $f'(x) \gtrsim (m-2) x^{m-3}$.  By linearization and because $f(\sigma) \sim \sigma^{m-2}$, it follows that
   \[ \frac{\sigma_+^{m-1} - \sigma_+^{-m+1}}{\sigma_+ - \sigma_+^{-1}} \geq \left(1 + \frac{\delta_1 (m-3)}{2}\right) \frac{\sigma^{m-1} - \sigma^{-m+1}}{\sigma - \sigma^{-1}}\]
  Hence, if
   \[ \frac{\delta_1 (m-3)}{2} > \delta_2\]
  then $\gamma(G_+) > \gamma(G)$.  In particular it
  is sufficient that $\delta_1 > 4\delta_2 / n$.

  To prove part (ii), recall from above that $f'(x) < 2m x^{m-3}$.
  Then, when $x = (1+o(1)) (n / \log(n))$
   \begin{eqnarray*}
     f'(x+\varepsilon) & < & 2m (x+\varepsilon)^{m-3} \\
     & = & 2m x^{m-3} \left( 1 + \frac{\varepsilon}{x} \right)^{m-3}\\
     & \leq & 2m x^{m-3} \exp\left(\frac{m \varepsilon}{x}\right) \\
     & \leq & 2n x^{m-3} \exp(2 \log(n) \varepsilon) 
   \end{eqnarray*}
  So for $0 < \varepsilon < \delta_1 \lambda_1$, we have
   \[ f'(x+\varepsilon) < 2n x^{m-3} \exp(2 \log(n) \delta_1 \lambda_1) \]
  Hence
   \[ \big( 1 + 3n \exp(2 \delta_1 \lambda_1 \log n ) \delta_1 \big) \frac{\sigma^{m-1} - \sigma^{-m+1}}{\sigma - \sigma^{-1}} >  \frac{\sigma_+^{m-1} - \sigma_+^{-m+1}}{\sigma_+ - \sigma_+^{-1}} \]

\end{proof}

\begin{lemma}\label{large_nbds}
  For every subset of $U$ of $N(x_k)$, we have
   \[ |U| - 1 < \sum_{y \in U} y \leq |U|. \]
  An immediate consequence is that there is at most one
  vertex in the neighborhood of $x_k$ with eigenvector
  entry smaller than $1/2$.
\end{lemma}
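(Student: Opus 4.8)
The plan is to derive both inequalities directly from the eigenvector equation at $x_k$, combined with the structural facts already established. By our normalization $x_k = 1$ is the largest entry, so every eigenvector entry satisfies $y \le 1$; summing this over any subset $U \subseteq N(x_k)$ gives the upper bound $\sum_{y \in U} y \le |U|$ immediately.

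For the lower bound, the key observation is that the eigenvector equation at $x_k$ expresses $\lambda_1$ as the full neighborhood sum, $\lambda_1 = \lambda_1 x_k = \sum_{w \sim x_k} w$. By lemma~\ref{connect_every} we have $N(x_k) = C \cup \{x_{k-1}\}$, so $|N(x_k)| = n-k+1$, and by lemma~\ref{max_lambda} we have $\lambda_1 > n-k = |N(x_k)| - 1$. Thus the total sum over the entire neighborhood already exceeds $|N(x_k)| - 1$. To pass to a subset $U$, I would bound the complementary sum using $y \le 1$, namely $\sum_{y \in N(x_k) \setminus U} y \le |N(x_k) \setminus U| = |N(x_k)| - |U|$. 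Subtracting this from the total yields $\sum_{y \in U} y > (|N(x_k)| - 1) - (|N(x_k)| - |U|) = |U| - 1$, which is the desired lower bound.

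The consequence then follows by contradiction. If two distinct vertices $y_1, y_2 \in N(x_k)$ both had eigenvector entry smaller than $1/2$, then taking $U = \{y_1, y_2\}$ would give $\sum_{y \in U} y = y_1 + y_2 < 1 = |U| - 1$, contradicting the lower bound just established.

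I do not expect a serious obstacle here; the only step requiring care is correctly identifying $\lambda_1$ with the full neighborhood sum and tracking the cardinality $|N(x_k)| = n-k+1$ from lemma~\ref{connect_every}, so that the positive slack $\lambda_1 - (n-k) > 0$ supplied by lemma~\ref{max_lambda} turns into exactly the ``$-1$'' appearing in the lower bound. Everything else is a routine application of the normalization $y \le 1$.
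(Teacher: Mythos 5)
Your proof is correct and matches the paper's argument essentially verbatim: the upper bound from the normalization $y \le 1$, and the lower bound obtained by subtracting the complementary sum $\sum_{y \in N(x_k)\setminus U} y \le |N(x_k)| - |U|$ from the full neighborhood sum $\sum_{y \in N(x_k)} y = \lambda_1 > |N(x_k)| - 1$ supplied by lemmas~\ref{max_lambda} and~\ref{connect_every}. The deduction of the consequence via $U = \{y_1, y_2\}$ is likewise the intended one, so nothing needs revision.
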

\begin{proof}
  The upper bound follows from $y \leq 1$, and the lower bound from
  the inequalities
   \[ \sum_{y \in N(x_k) \setminus U} y  \leq |N(x_k)| - |U|\]
  and
   \[ \sum_{y \in N(x_k)} y = \lambda_1(G) > |N(x_k)| - 1 .\]
\end{proof}

\begin{lemma}\label{k_2_lemma}
 The vertex $x_{k-2}$ has degree exactly $2$ in $G$.
\end{lemma}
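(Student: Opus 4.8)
The plan is to argue by contradiction, assuming $\deg(x_{k-2}) \ge 3$. By lemma~\ref{connect_every} the vertices $x_1,\dots,x_{k-2}$ form a pendant path, so the only path-neighbours of $x_{k-2}$ are $x_{k-3}$ and $x_{k-1}$; hence $x_{k-2}$ has a nonempty set $T$ of neighbours off the path. Since $x_1,\dots,x_k$ is a shortest path, $x_{k-2}$ is adjacent to neither $x_k$ nor any $x_i$ with $i\le k-3$, so $T\subseteq C=N(x_k)\setminus\{x_{k-1}\}$; by lemma~\ref{large_nbds} at most one entry of $T$ is below $1/2$ and $\sum_{y\in T}y>|T|-1$. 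Writing $\gamma(G)$ in the form~\eqref{pr_form}, the whole argument is driven by the magnitude of $x_{k-2}$, and I would split on whether $x_{k-2}$ exceeds a fixed small constant $\varepsilon_0$.

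In the regime $x_{k-2}>\varepsilon_0$ I would contradict maximality directly by comparing $G$ to the kite $H=P_k\cdot K_{n-k+1}$. From~\eqref{pr_form} and $x_{k-2}>\varepsilon_0$ we get $\gamma(G)\le \varepsilon_0^{-1}\sigma^{k-2}/(\sigma-\sigma^{-1})$, whereas the computation in lemma~\ref{max_lambda} gives $\gamma(H)=(\sigma_H^{k}-\sigma_H^{-k})/(\sigma_H-\sigma_H^{-1})\gtrsim \sigma_H^{k-1}$, so $\gamma(G)/\gamma(H)$ is of order $(\sigma/\sigma_H)^{k-3}/\sigma_H^{2}$. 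The crucial input is a tight eigenvalue separation: evaluating the eigenvector equation at $x_k$ gives $\lambda_1(G)=\sum_{y\sim x_k}y\le |N(x_k)|=n-k+1$, while lemma~\ref{kite_lambda} gives $\lambda_1(H)>n-k$, so $\lambda_1(G)-\lambda_1(H)<1$ and hence $\sigma-\sigma_H<1+o(1)$. Since $k-3\sim n$ and $\sigma_H\sim n/\log n$, this forces $(\sigma/\sigma_H)^{k-3}\le n^{1+o(1)}$, which is dwarfed by $\sigma_H^{2}\sim (n/\log n)^2$; thus $\gamma(G)<\gamma(H)$, a contradiction.

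In the regime $x_{k-2}\le\varepsilon_0$ I would delete every edge of $T$ to obtain $G_-$, in which $x_{k-2}$ has degree $2$, so $x_1,\dots,x_{k-2}$ remains a pendant path and lemma~\ref{change} applies with $m=k-1$ and $G_+=G$ (the hypothesis $n-m=(1+o(1))n/\log n$ is lemma~\ref{s_range}, and one checks $x_k$ remains the maximum entry). I would then verify part~(ii). Deleting $T$ subtracts $\sum_{y\in T}y$ from the eigenvector equation at $x_{k-2}$, so $x_{k-2}^-\approx x_{k-1}/\lambda_1$ against $x_{k-2}\approx(x_{k-1}+\sum_T y)/\lambda_1$, giving $\delta_2\gtrsim\sum_T y$, which is bounded below. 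For $\delta_1$, the eigenvector equation gives $\sum_{y\in T}y\le\lambda_1 x_{k-2}$, so the Rayleigh bound $\lambda_1-\lambda_1^-\le 2x_{k-2}\sum_T y/\|v\|^2$ yields $\delta_1\lesssim (x_{k-2})^2\log n/n$, using $\|v\|^2\gtrsim|C|\sim n/\log n$. Taking $\varepsilon_0$ small keeps $2\delta_1\lambda_1\log n\le c\,\varepsilon_0^2\log n$ with $c\varepsilon_0^2<1$, so $\exp(2\delta_1\lambda_1\log n)\le n^{c\varepsilon_0^2}$; combined with the quadratic-versus-linear gap in $x_{k-2}$ between $\delta_1$ and $\delta_2$, this gives $\delta_1\exp(2\delta_1\lambda_1\log n)<\delta_2/(3n)$, so lemma~\ref{change}(ii) yields $\gamma(G)<\gamma(G_-)$, again contradicting maximality.

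The main obstacle is the exponential amplification factor $\exp(2\delta_1\lambda_1\log n)$ in lemma~\ref{change}(ii): because the pendant path has length $\sim n$, the first factor of $\gamma$ magnifies any increase in $\lambda_1$, so part~(ii) is usable only when $\delta_1=O(1/n)$. This is exactly why a single appeal to lemma~\ref{change} cannot close the proof: when $x_{k-2}$ is bounded below, deleting its off-path edges moves $\lambda_1$ too much (one checks $\delta_1$ can be as large as $(\log n)/n$), and the exponential overwhelms the gain in $x_{k-2}$. Dispatching that case instead through the global comparison of the second paragraph—made possible by the sharp bound $\lambda_1(G)\le n-k+1$—is the delicate point, and arranging the two regimes to meet at a common constant threshold $\varepsilon_0$ is where the estimates must be carried out carefully.
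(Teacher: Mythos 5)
Your regime-1 argument (when $x_{k-2}>\varepsilon_0$, compare against $P_k\cdot K_{n-k+1}$ using $\lambda_1(G)\le n-k+1$ and lemma~\ref{kite_lambda}) is sound, but your regime-2 argument has a genuine gap: you apply lemma~\ref{change} with base graph $G_-$ and $G_+=G$, and that lemma requires that adding the deleted edges back ``does not affect which vertex has largest principal eigenvector entry,'' i.e.\ that $x_k$ is still the maximum-entry vertex in $G_-$. You dismiss this with ``one checks $x_k$ remains the maximum entry,'' but it is not automatic and can fail: if in $G$ the vertex $x_{k-1}$ is joined to all of $C$ except one low-entry vertex $w$ (the single vertex permitted by lemma~\ref{large_nbds} to have entry below $1/2$, whose entry can be as small as roughly $1/\lambda_1$), then in $G_-$ the eigenvector equations give $(\lambda_1^-+1)(x_{k-1}^--x_k^-)=x_{k-2}^--\sum_{u\in C\setminus N(x_{k-1})}u^-$, whose sign can be positive, so the maximum migrates to $x_{k-1}$ and the hypothesis of lemma~\ref{change} is violated. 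This failure mode is real: the paper itself must devote a separate case (Case 2 of lemma~\ref{k_1_pre_lemma}) to exactly this migration when perturbing edges one vertex later, replacing \eqref{pr_form2} by \eqref{case2} and redoing the estimate with $1-x_{k-1}$ in place of $\delta_2$. Your proof needs the analogous case analysis and does not contain it. A second, smaller flaw in the same regime: your claim that $\delta_2\gtrsim\sum_{y\in T}y$ ``is bounded below'' is false when $T$ consists of that one small-entry vertex, where $\sum_{y\in T}y$ can be of order $\log n/n$; the inequality of lemma~\ref{change}(ii) does survive this case, but only via the extra input $y\ge 1/\lambda_1$ for $y\in C$, which your sketch never invokes, and your ``quadratic-versus-linear gap'' heuristic degenerates precisely there (when $x_{k-2}$ is within $O(1/\lambda_1)$ of its pendant-path value).

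For contrast, the paper's proof needs none of this two-regime machinery and no perturbation at all. Setting $U=N(x_{k-2})\cap N(x_k)$, the key observation is that $U$ contains $x_{k-1}$ \emph{as well as} your set $T$, so $|U|\ge 2$ and lemma~\ref{large_nbds} gives $\sum_{y\in U}y>1$. Running the recursion of lemma~\ref{path_bound} one step past $x_{k-2}$, with $\sum_{y\in U}y$ playing the role of the next entry, yields the exact identity $\gamma(G)=\frac{\sigma^{k-1}-\sigma^{-k+1}}{\sigma-\sigma^{-1}}\bigl(\sum_{y\in U}y\bigr)^{-1}<\frac{\sigma^{k-1}-\sigma^{-k+1}}{\sigma-\sigma^{-1}}$. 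Comparing with $H=P_{k-1}\cdot K_{n-k+2}$ (path one shorter, clique one larger) and using maximality plus monotonicity of $x\mapsto(x^{k-1}-x^{-k+1})/(x-x^{-1})$ forces $\sigma>\sigma_H$, hence $\lambda_1(G)>\lambda_1(H)>n-k+1$ by lemma~\ref{kite_lambda}, contradicting $\lambda_1(G)=\sum_{y\sim x_k}y\le|N(x_k)|=n-k+1$ from lemma~\ref{connect_every}. In effect, the factor $\bigl(\sum_{y\in U}y\bigr)^{-1}<1$ absorbs both of your regimes at once; if you want to salvage your route, regime 2 must be patched with the migration case analysis and the $y\ge 1/\lambda_1$ bound, but you should be aware the statement admits this much shorter closed-form argument.
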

\begin{proof}
  Assume to the contrary.  Let $U = N(x_{k-2}) \cap N(x_{k})$.  Then
  $|U| \geq 2$, so by lemma~\ref{large_nbds} we have
   \[ \sum_{y \in U} y > |U| - 1 \geq 1 . \]
  Now, by the same argument as the in the proof of lemma~\ref{path_bound},
  we have that
   \[ \gamma(G) = \frac{\sigma^{k-1} - \sigma^{-k+1}}{\sigma - \sigma^{-1}} \left( \sum_{y \in U} y \right)^{-1} \]
  Let $H = P_{k-1} \cdot K_{n-k+2}$.  Then by maximality of $\gamma(G)$ we have
  \begin{equation*}
   \frac{\sigma^{k-1} - \sigma^{-k+1}}{\sigma - \sigma^{-1}} > \gamma(G) \geq \gamma(H) = \frac{\sigma_H^{k-1} - \sigma_H^{-k+1}}{\sigma_H - \sigma_H^{-1}} 
  \end{equation*}
  So $\sigma > \sigma_H$, which means $\lambda_1(G) > \lambda_1(H) > n-k+1$.
  This means that $\Delta(G) > n-k+1$, but we have established that
  $\Delta(G) = n-k+1$.
\end{proof}

We now know that $x_1, x_2, \cdots, x_{k-1}$ is a pendant path in $G$, and
so equation~\ref{pr_form} becomes
\begin{equation}\label{pr_form2}
 \gamma(G) = \frac{\sigma^{k-1} - \sigma^{-k+1}}{\sigma-\sigma^{-1}} \frac{1}{x_{k-1}}
\end{equation}

\begin{lemma}\label{k_1_pre_lemma}
 The vertex $x_{k-1}$ has degree less than $11 |C| / \sqrt{\log n}$.  
\end{lemma}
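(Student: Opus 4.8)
The plan is to reduce the statement to an upper bound on the single eigenvector entry $x_{k-1}$, and then to extract that bound from the maximality of $\gamma(G)$. First I would write the eigenvector equation at $x_{k-1}$. Since $x_1,\dots,x_{k-1}$ is a pendant path, the only neighbours of $x_{k-1}$ off the path lie in $C=N(x_k)\setminus\{x_{k-1}\}$, so with $U=N(x_{k-1})\cap C$ (thus $\deg(x_{k-1})=|U|+2$) we have $\lambda_1 x_{k-1}=x_{k-2}+1+\sum_{y\in U}y$. By Lemma~\ref{large_nbds}, $\sum_{y\in U}y>|U|-1$, whence $|U|<\lambda_1 x_{k-1}\le(n-k+1)x_{k-1}$, using $\lambda_1\le\Delta=n-k+1$. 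Consequently $\deg(x_{k-1})<(n-k+1)x_{k-1}+2$, and it suffices to prove that $x_{k-1}\lesssim 1/\sqrt{\log n}$ (with a small enough constant the extra $+2$ is absorbed and the bound $11|C|/\sqrt{\log n}$ follows).

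To bound $x_{k-1}$ I would run the argument of Lemma~\ref{path_bound} one step past the pendant path: exactly as in Lemma~\ref{k_2_lemma}, the identity $\lambda_1-u_{k-3}/u_{k-2}=(\sigma^{k}-\sigma^{-k})/\big((\sigma-\sigma^{-1})u_{k-2}\big)$ converts (\ref{pr_form2}) into $\gamma(G)=\frac{\sigma^{k}-\sigma^{-k}}{\sigma-\sigma^{-1}}\big(1+\sum_{y\in U}y\big)^{-1}$. Writing $H_0=P_k\cdot K_{n-k+1}$, maximality gives $\gamma(G)\ge\gamma(H_0)=\frac{\sigma_{H_0}^{k}-\sigma_{H_0}^{-k}}{\sigma_{H_0}-\sigma_{H_0}^{-1}}$, so the common function $f_k(x)=\frac{x^k-x^{-k}}{x-x^{-1}}$ satisfies $f_k(\sigma)\ge(1+\sum_{y\in U}y)\,f_k(\sigma_{H_0})>|U|\,f_k(\sigma_{H_0})$, i.e. $(\sigma/\sigma_{H_0})^{k-1}\gtrsim|U|$. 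The essential coupling is that $\sigma$ is itself controlled by $x_{k-1}$: since $N(x_k)=C\cup\{x_{k-1}\}$ and every entry is at most $1$, $\lambda_1(G)=\sum_{y\in N(x_k)}y\le(n-k)+x_{k-1}$, while $\lambda_1(H_0)>n-k$ by Lemma~\ref{kite_lambda}, so $\lambda_1(G)-\lambda_1(H_0)<x_{k-1}$. Because $k-1=(1+o(1))(\log n)\,\sigma_{H_0}$, the exponent amplifies this gap: $(\sigma/\sigma_{H_0})^{k-1}\le\exp\!\big((1+o(1))(\log n)\,x_{k-1}\big)$. Combining with $|U|\gtrsim (n-k)x_{k-1}$ turns the chain into a self-referential inequality of the shape $(n-k)\,x_{k-1}\lesssim\exp\!\big((1+o(1))(\log n)\,x_{k-1}\big)$.

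Analysing this inequality is the crux. Its two sides cross twice, so it forces $x_{k-1}$ either into a small regime $x_{k-1}=O(1/\sqrt{\log n})$ (in fact much smaller), which already yields $\deg(x_{k-1})<11|C|/\sqrt{\log n}$ through the reduction above, or into a regime with $x_{k-1}$ within $O(\log\log n/\log n)$ of $1$. I expect the main obstacle to be excluding this second regime, which does \emph{not} succumb to a bare repetition of Lemma~\ref{k_2_lemma}: there the comparison kite had clique order $n-k+2$ and spectral radius exceeding $\Delta$, whereas here the matching exponent $k$ only permits clique order $n-k+1$, so one must exploit the amplification rather than a direct spectral-radius violation. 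In the near-$1$ regime the eigenvector equation forces $x_{k-1}$ to be adjacent to all but $o(n/\log n)$ vertices of $C$, so $x_{k-1}$ and $x_k$ share $\approx n-k$ common neighbours and $\{x_{k-1},x_k\}\cup C$ behaves like a clique of order $n-k+2$; comparing $\gamma(G)$ against $P_{k-1}\cdot K_{n-k+2}$, whose spectral radius exceeds $\Delta=n-k+1$ by Lemma~\ref{kite_lambda}, should then force $\lambda_1(G)>\Delta$, a contradiction. The delicate point throughout is quantitative control of the error terms in the amplification step and in the ``behaves like a clique'' step, so that they do not swamp the $\Theta(\log\log n/\log n)$ margin; tracking these lower-order terms is where most of the work lies.
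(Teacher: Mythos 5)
Your route is genuinely different from the paper's. The paper proves this lemma by local perturbation: assuming $\deg(x_{k-1}) \ge 11|C|/\sqrt{\log n}$, it adds a single edge from $x_{k-1}$ to a vertex $z \in C$ with $z \ge 1/2$, lower-bounds the gain in $\lambda_1$ by the Rayleigh quotient, upper-bounds the change in $x_{k-1}$ by the eigenvector equation, and invokes Lemma~\ref{change} to get $\gamma(G_+) > \gamma(G)$, with a separate case analysis for when the maximal entry migrates to $x_{k-1}$ and a final step for the one possible vertex of $C$ with entry below $1/2$. You instead use only exact identities and comparisons with fixed kites: your identity $\gamma(G)=f_k(\sigma)\big(1+\sum_{y\in U}y\big)^{-1}$ is correct (via the eigenvector equation at $x_{k-1}$ and $\lambda_1=\sigma+\sigma^{-1}$ it is equivalent to equation~\eqref{pr_form2}), the amplified comparison with $P_k\cdot K_{n-k+1}$ giving $|U|\le n^{(1+o(1))x_{k-1}}$ is sound (note $\lambda_1(G)\le n-k+x_{k-1}$ uses Lemma~\ref{connect_every}), and the reduction $\deg(x_{k-1})<\lambda_1 x_{k-1}+2$ is fine. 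Your approach avoids all edge perturbations and hence the paper's delicate tracking of which vertex is maximal after modification; its cost is that it does not build the machinery (Lemma~\ref{change}) that the paper reuses in Lemma~\ref{k_1_lemma}.

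One step as stated would fail, though it is repairable inside your framework. In the near-$1$ regime you claim the comparison with $H'=P_{k-1}\cdot K_{n-k+2}$ ``should then force $\lambda_1(G)>\Delta$.'' It cannot: $\lambda_1(G)\le \Delta(G)=n-k+1<\lambda_1(H')$ unconditionally, so $\sigma<\sigma_{H'}$ always, and the mechanism of Lemma~\ref{k_2_lemma} is unavailable because your prefactor $1+\sum_{y\in U}y$ is $\Theta(n-k)$ rather than merely $\ge 1$ (equivalently, in \eqref{pr_form2} the factor $x_{k-1}^{-1}$ exceeds $1$, the wrong direction for concluding $\sigma\ge\sigma_{H'}$ from $\gamma(G)\ge\gamma(H')$). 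The fix is to show $\gamma(H')>\gamma(G)$ directly: by \eqref{pr_form2} and Lemma~\ref{path_bound}, $\gamma(H')/\gamma(G)\ge (\sigma_{H'}/\sigma)^{k-2}\,x_{k-1}\,(1-O(n^{-3+o(1)}))$, while $\sigma_{H'}-\sigma\ge\lambda_1(H')-\lambda_1(G)\ge (1-x_{k-1})+\tfrac{1}{(n-k+2)(n-k+1)}$ by Lemma~\ref{kite_lambda}, and $(k-2)/\sigma_{H'}=(1-o(1))\log n$ by Lemma~\ref{s_range}. Writing $\epsilon=1-x_{k-1}$ and $\delta=\Theta((n-k)^{-2})$, this gives $\gamma(H')/\gamma(G)\ge\big(1+(1-o(1))\log n\,(\epsilon+\delta)\big)(1-\epsilon)$ up to negligible error, which exceeds $1$ whenever $x_{k-1}\ge 2/\log n$; in the extreme sub-regime $\epsilon=O(1/n)$, which your sketch does not isolate and where the $\Theta(\log\log n/\log n)$ margin you invoke is absent, the decisive positive term is $(k-2)\delta/\sigma_{H'}=\Theta(\log^3 n/n^2)$, which dominates the $O(\log^3 n/n^3)$ errors. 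This repaired comparison in fact covers the entire range $x_{k-1}\ge 2/\log n$, so it subsumes your middle-regime analysis (whose boundary is sensitive to the unquantified $o(1)$ in Lemma~\ref{s_range}) and yields $\deg(x_{k-1})=O(n/\log^2 n)$, stronger than the stated bound.
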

\begin{proof}
  Assume to the contrary, so throughout this proof we assume that
  the degree of $x_{k-1}$ is at least $11 |C| / \sqrt{\log n}$.
  Let $G_+$ the graph obtained form $G$ with
  an additional edge from $x_{k-1}$ to a vertex $z \in C$ with $z \geq 1/2$.
  Let $\lambda^+_1 = \lambda_1(G_+)$ and  let $x^+$ be the principal eigenvector
  entry of vertex $x$ in $G_+$, where this eigenvector is normalized to have
  $x_{k}^+ = 1$.

  \noindent \textbf{Change in $\lambda_1$}: By equation~\ref{rayleigh}, we
  have $\lambda_1^+ - \lambda_1 \geq 2 \frac{x_{k-1} z}{||v||^2_2}$.
  A crude upper bound on $||v||_2^2$ is
   \[ ||v||_2^2 \leq 1 + \sum_{y \sim x_{k}} y + \frac{2}{\lambda_1} + \frac{4}{\lambda_1^2} + \cdots < 2 \lambda_1 \]
  We also have that $z \geq 1/2$ so
   \[ \lambda_1^+ \geq \left( 1 + \frac{x_{k-1}}{2\lambda_1^2} \right) \lambda_1 .\]

  \noindent \textbf{Change in $x_{k-1}$}:  Let $U = N(x_{k-1} \cap C)$.
  By the eigenvector equation we have
  \begin{eqnarray*}
    x_{k-1} & = & \frac{1}{\lambda_1} \left( x_{k-2} + x_{k} + \sum_{y \in U} y \right) \\
    x_{k-1}^+ & = & \frac{1}{\lambda_1^+} \left( x_{k-2}^+ + x_{k}^+ + z^+ + \sum_{y \in U} y^+ \right)    
  \end{eqnarray*}
  Subtracting these, and using that $\lambda_1 < \lambda_1^+$ and $x_k = x_k^+ = 1$, we get
   \[ x_{k-1}^+ - x_{k-1} \leq \frac{1}{\lambda_1} \left( x_{k-2}^+ - x_{k-2} + z^+ + \sum_{y \in U} y^+ - y\right) .\]
  By lemma~\ref{large_nbds}, we have $\sum_{y \in U} y^+ - y \leq 1$.  We also have
  $x_{k-2}^+ - x_{k-2} < 1$ and $z^+ \leq 1$.  Hence $x_{k-1}^+ - x_{k-1} \leq 3 / \lambda_1$,
  or
   \[ x_{k-1}^+ \geq \left( 1 + \frac{3}{\lambda_1 x_{k-1}} \right) x_{k-1}\]

  We can only apply lemma~\ref{change} if $x_{k}^+$ is the
  largest eigenvector entry in $G_+$.  So we must consider two cases.

  \noindent \textbf{Case 1:} If in
  $G^+$ the largest eigenvector entry is still attained by vertex $x_k$, then
  we can apply lemma~\ref{change}, and see that $\gamma(G^+) > \gamma(G)$
  if
   \[ \frac{x_{k-1}}{2 \lambda_1^2} \geq \frac{12}{\lambda_1 x_{k-1} n} \]
  or equivalently
   \[ x_{k-1}^2 \geq \frac{24 \lambda_1}{n} .\]
  We have that $\lambda_1 = (1+o(1)) (n - n / \log(n))$, so it suffices for
  \begin{equation}\label{eqn_need}
   x_{k-1} \geq \frac{5}{\sqrt{\log n}} .
  \end{equation}
  We know that
   \[ x_{k-1} > \frac{|U| - 1}{2 \lambda_1} .\]
  By assumption
   \[ |U| + 2 = N(x_{k-1}) \geq 11 |C| / \sqrt{\log n} \]
  Equation~\ref{eqn_need} follows from this, so $\gamma(G^+) > \gamma(G)$.  

  \noindent \textbf{Case 2:} Say the largest eigenvector entry of $G^+$ is no
  longer attained by vertex $x_k$.  It is easy to see that the largest
  eigenvector entry is not attained by a vertex with degree less than or equal
  to $2$, and comparing the neighborhood of any vertex in $C$ with the neighborhood
  of $x_{k}$ we can see that $x_k \geq y$ for all $y \in C.$  So the
  largest eigenvector entry must be attained by $x_{k-1}$.
  Then equation~\ref{pr_form2} no longer holds, instead we have
   \begin{equation}\label{case2} \gamma(G_+) = \frac{\sigma_+^{k-1} - \sigma_+^{-k+1}}{\sigma_+ - \sigma_+^{-1}} .\end{equation}
Recall that in lemma~\ref{change} we determined the change from $\gamma(G_+)$ to $\gamma(G)$ by considering $\lambda^+_1 - \lambda_1$ and $x^+_{k-1}-x_{k-1}$. In this case, by \eqref{case2}, we must consider $\lambda^+_1 - \lambda_1$ and $1-x_{k-1}$.
   Now if  $x_{k-1}^+ > x_k^+$ , then vertex $x_{k-1}$ in $G$
  is connected
  to all of $C$ except perhaps a single vertex.  Hence
  in $G$, the vertex $x_{k-1}$ is connected to all of $C$ except at most
  two vertices.  This gives the bound
   \[ 1 - x_{k-1} \leq 3 / \lambda_1 \]
  and so as in the previous case, $\gamma(G_+) > \gamma(G)$.

  So in all cases, $x_{k-1}$ is connected to all vertices in $C$ that have
  eigenvector entry larger than $1/2$.  If all vertices in $C$ have eigenvector
  entry larger than $1/2$, then $x_{k-1}$ is connected to all of $C$,
  and this implies that $x_{k-1} > x_k$, which is a contradiction.
  At most one vertex in $C$ is smaller than $1/2$, and so
  there is a single vertex $z \in C$ with $z < 1/2$.  We will
  quickly check that adding the edge $\{ x_{k-1}, z\}$ increases
  the principal ratio.  As before let $G_+$ be the graph obtained by
  adding this edge.  The largest eigenvector entry in $G_+$ is
  attained by $x_{k-1}$, as its neighborhood strictly contains the neighborhood
  of $x_{k}$.  
  As above, adding the edge $\{ z, x_{k}\}$ increases the spectral radius
  at least
   \[ \lambda_1^+ > \left(1 + \frac{z}{2\lambda_1^2} \right) \lambda_1 \]
  and we have $1 - x_{k-1} < 1 - z/\lambda_1$.  Applying lemma~\ref{change}
  we see that $\gamma(G_+) > \gamma(G)$, which is a contradiction.
  Finally we conclude that the degree of $x_{k-1}$ must be smaller than
  $11 |C| / \sqrt{\log n}$.
\end{proof}

We note that this lemma gives that $x_{k-1} < 1/2$ which implies that any vertex in $C$ has eigenvector entry larger than $1/2$.

\begin{lemma}\label{k_1_lemma}
 The vertex $x_{k-1}$ has degree exactly $2$ in $G$.  It follows that
 $x_{k-1} < 2 / \lambda_1$.
\end{lemma}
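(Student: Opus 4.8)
The plan is to argue by contradiction using part (ii) of Lemma~\ref{change} in the ``reverse'' direction. Suppose $\deg(x_{k-1}) > 2$. Since $x_1,\dots,x_{k-1}$ is a pendant path, the only path-neighbors of $x_{k-1}$ are $x_{k-2}$ and $x_k$, so $x_{k-1}$ must have a neighbor $y \in C$; by the remark following Lemma~\ref{k_1_pre_lemma} every vertex of $C$ has eigenvector entry exceeding $1/2$, so $y > 1/2$. Let $G_-$ be obtained from $G$ by deleting the single edge $\{x_{k-1}, y\}$, so that $G$ is recovered from $G_-$ by adding this edge. I would apply Lemma~\ref{change} with $G_-$ in the role of the base graph and $G$ in the role of $G_+$. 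The hypotheses are readily checked: deleting an edge incident to $x_{k-1}$ leaves the degrees of $x_1,\dots,x_{k-2}$ untouched, so $x_1,\dots,x_{k-1}$ is still a pendant path in $G_-$; Lemma~\ref{s_range} gives $n-k = (1+o(1))n/\log n$; and since every entry is at most $x_k = 1$ and $x_k$ remains adjacent to all of $C\cup\{x_{k-1}\}$ in both graphs, $x_k$ still attains the largest entry in each. Writing $\lambda_1(G) = (1+\delta_1)\lambda_1(G_-)$ and $x_{k-1}(G) = (1+\delta_2)x_{k-1}(G_-)$, it then suffices to verify $\delta_1 \exp(2\delta_1 \lambda_1(G_-)\log n) < \delta_2/3n$, which forces $\gamma(G) < \gamma(G_-)$ and contradicts maximality.

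The key step is an upper bound on $\delta_1$. Rather than a crude perturbation estimate, I would use the Rayleigh quotient \eqref{rayleigh} with $G$'s own eigenvector $v$: since $v^T A(G_-) v \le \lambda_1(G_-)\|v\|_2^2$ while the deleted edge contributes $v^TA(G)v - v^TA(G_-)v = 2x_{k-1}y$, we obtain
\[ \lambda_1(G) - \lambda_1(G_-) \le \frac{2 x_{k-1}\, y}{\|v\|_2^2}. \]
Because all $|C| = (1+o(1))n/\log n$ vertices of $C$ have entry larger than $1/2$, one has $\|v\|_2^2 > |C|/4$, and with $x_{k-1} < 1/2$ and $y \le 1$ this yields $\lambda_1(G) - \lambda_1(G_-) = O(\log n / n)$. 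Consequently $\delta_1 \lambda_1(G_-) = O(\log n/n)$ and $\delta_1 = O((\log n)^2/n^2)$, so the exponential factor $\exp(2\delta_1\lambda_1(G_-)\log n)$ tends to $1$ and the left-hand side of the target inequality is $O((\log n)^2/n^2)$.

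For the right-hand side I would bound $\delta_2$ below by a constant. In $G$ the eigenvector equation gives $x_{k-1} \ge (x_k + y)/\lambda_1 \ge \tfrac{3}{2}\lambda_1^{-1}$, while on the pendant path in $G_-$ we have $x_{k-1}(G_-) = (x_{k-2}^- + 1)/\lambda_1(G_-) < (\lambda_1(G_-)-1)^{-1}$, using $x_{k-2}^- < x_{k-1}^-$; since $\lambda_1(G)/\lambda_1(G_-) = 1+o(1)$ this makes $\delta_2$ at least a fixed positive constant. Then $\delta_2/3n \gg (\log n)^2/n^2$, the hypothesis of part (ii) of Lemma~\ref{change} holds, and we reach the desired contradiction; hence $\deg(x_{k-1}) = 2$. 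Finally, with $x_{k-1}$ adjacent only to $x_{k-2}$ and $x_k$, the eigenvector equation gives $x_{k-1} = (x_{k-2}+1)/\lambda_1 < 2/\lambda_1$ since $x_{k-2} < x_{k-1} < 1$.

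The main obstacle is precisely the control of $\delta_1$: the exponential $\exp(2\delta_1\lambda_1\log n)$ in Lemma~\ref{change}(ii) is extremely sensitive, so a naive bound such as Weyl's $\lambda_1(G) \le \lambda_1(G_-)+1$ would make it blow up. The resolution is to exploit that $\|v\|_2^2$ is as large as $\Theta(n/\log n)$, so that attaching the tiny-valued vertex $x_{k-1}$ to $y$ moves $\lambda_1$ by only $O(\log n/n)$, keeping the exponent $o(1)$ while $\delta_2$ stays bounded away from zero.
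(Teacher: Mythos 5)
Your overall strategy---compare $G$ with an edge-deleted graph $G_-$ via part (ii) of Lemma~\ref{change}, bounding the eigenvalue shift with the Rayleigh quotient and the norm bound $\|v\|_2^2 = \Omega(n/\log n)$---is exactly the paper's strategy, but your decision to delete only the \emph{single} edge $\{x_{k-1},y\}$ opens a genuine gap. Under the contradiction hypothesis you only know $c := |N(x_{k-1}) \cap C| \geq 1$; if $c \geq 2$, then after removing one edge $x_{k-1}$ still has neighbors in $C$, so $x_1, \dots, x_k$ is \emph{not} a pendant path in $G_-$, and your identity $x_{k-1}^- = (x_{k-2}^- + 1)/\lambda_1^-$ is false: $\lambda_1^- x_{k-1}^-$ also picks up the contributions of the remaining $c-1$ neighbors in $C$, each with entry exceeding roughly $1/2$. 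Consequently your constant lower bound on $\delta_2$ fails; deleting one of many cross edges shrinks $x_{k-1}$ only by a factor of roughly $1 - y/(\lambda_1 x_{k-1})$, i.e.\ $\delta_2 \asymp 1/c$, which tends to $0$. This is not merely a lost estimate but a real obstruction: the Rayleigh bound is two-sided up to constants (testing $\lambda_1(G)$ with the eigenvector of $G_-$ gives $\lambda_1 - \lambda_1^- \geq 2 x_{k-1}^- y^- / \|v^-\|_2^2$), so $\delta_1 \asymp x_{k-1} y / (\lambda_1 \|v\|_2^2)$, and the ratio of the two sides in the hypothesis of Lemma~\ref{change}(ii) comes out to about $6 n x_{k-1}^2 / \|v\|_2^2$, i.e.\ of order $x_{k-1}^2 \log n$. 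Since Lemma~\ref{k_1_pre_lemma} only caps the degree at $11|C|/\sqrt{\log n}$, the entry $x_{k-1}$ can be as large as order $11/\sqrt{\log n}$, making this ratio a large constant, and the sufficient condition of Lemma~\ref{change}(ii) simply does not hold. Your argument is sound only in the special case $c = 1$.

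The paper's proof avoids this precisely by deleting \emph{all} $c$ edges from $x_{k-1}$ to $C$ at once. Then $x_1, \dots, x_k$ genuinely is a pendant path in $G_-$, so $x_{k-1}^- = (1+o(1))/\lambda_1$ is known exactly, and the drop in $x_{k-1}$ scales \emph{linearly} in $c$, giving $\delta_2 \geq c/(3 x_{k-1}^- \lambda_1) \approx c/3$, while the eigenvalue drop scales only \emph{quadratically}: using the Rayleigh quotient with the Cauchy--Schwarz bound $\|v\|_2^2 \geq (n-k)^2/(n-k+1)$ together with $x_{k-1} \leq (c+2)/\lambda_1$ (note this sharper bound, rather than your cruder $x_{k-1} < 1/2$, is what makes $\delta_1 = O(c^2/(n-k)^3)$; since $x_{k-1}$ effectively enters the comparison quadratically, you cannot afford to discard its dependence on $c$), the paper gets a left-hand side of size $O(1/n)$ against a right-hand side of size at least $1/\log^{3/2} n$, uniformly over the whole admissible range of $c$. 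Your closing deduction, that once $\deg(x_{k-1}) = 2$ the eigenvector equation gives $x_{k-1} = (x_{k-2}+1)/\lambda_1 < 2/\lambda_1$, matches the paper and is fine.
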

\begin{proof}
  Let $U = N(x_{k-1}) \cap C$, $c = |U|$.  If $c=0$ then we are done.
  Otherwise let $G_-$ be the graph obtained from $G$ by deleting these
  $C$ edges.  We will show that $\gamma(G_-) > \gamma(G)$.

  \noindent \textbf{(1) Change in $\lambda_1$:}
  We have by equation~\ref{rayleigh}, 
   \[ \lambda_1 - \lambda^{-}_1 \leq 2c \frac{x_{k-1}}{||v||_2^2}\]
  By Cauchy--Schwarz,
   \[ ||v||_2^2 > \sum_{x \in N(x_{k})} x^2 \geq \frac{\left(\sum_{x \in N(x_k)} x\right)^2}{|C|+1} \geq \frac{(n-k)^2}{n-k+1}\]
  We also have
   \[ x_{k-1} \leq \frac{c+2}{\lambda_1}\]
  Combining these we get
   \[ \lambda_1 - \lambda_1^{-} < \frac{9c^2}{\lambda_1 (n-k+1)} \Rightarrow \lambda_1 < \left(1 + \frac{9c^2}{\lambda_1 \lambda_1^{-} (n-k+1)}\right) \lambda_1^{-}\]
  We have $\lambda_1 \lambda_1^{-} > (n-k)^2$, so
  \[ \lambda_1 < \left( 1 + \frac{10c^2}{(n-k)^3} \right) \lambda_1^{-} \]
  
  \noindent \textbf{(2) Change in $x_{k-1}$:}
  At this point, we know that in $G_-$ the vertices $x_1,\cdots , x_{k}$ form a pendant path, and so by the proof of lemma~\ref{path_bound}, we have $x_{k-1}^- = (1+o(1)) / \lambda_1$. By the eigenvector equation and using that the vertices in $C$ have eigenvector entry at least $1/2$, we have $x_{k-1} > (1 + c/2) / \lambda_1$.  So
   \begin{equation*}
     x_{k-1} - x_{k-1}^{-} > \frac{1}{\lambda_1} \left( \frac{c}{2} + o(1) \right)
   \end{equation*}
   In particular,
    \[ x_{k-1} > \left( 1 + \frac{c}{3x_{k-1}^{-}\lambda_1}\right) x_{k-1}^-\]
   Applying lemma~\ref{change}, it suffices now to show that
   \begin{equation}\label{eqn_need2}
    \frac{10c^2}{(n-k)^3} \exp \left(2 \frac{10c^2}{(n-k)^3} \lambda_1^- \log n \right) < \frac{c}{9 x_{k-1}^- \lambda_1 n} .
   \end{equation}
   Now
    \[ \frac{10c^2}{(n-k)^3} < 10 \frac{11^2}{\log(n)} \frac{|C|^2}{(n-k)^3} < \frac{11^3}{\log n} \frac{\log n}{n} = \frac{11^3}{n} .\]
    Similarly $2 \frac{10c^2}{(n-k)^3} \lambda_1^- \log n < 2\cdot 11^3$, so the lefthand side of equation~\ref{eqn_need2} is smaller than $C_0 / n$, where
   $C_0$ is an absolute constant.
   For the righthand side, recall that $x_{k-1}^- \lambda_1 = 1 + o(1)$, and also that
    \[ c > \frac{11}{\sqrt{\log n}} \left( \frac{n}{\log n} + o(1) \right) > \frac{10n}{\log^{3/2} n} .\]
   So the righthand side is larger than $1 / \log^{3/2}{n}$.  Hence for large
   enough $n$, the righthand side is larger than the lefthand side.
  
\end{proof}

We are now ready to prove the main theorem.

{
\renewcommand{\thetheorem}{1}
\begin{theorem}
  For sufficiently large $n$, the connected graph $G$ on $n$
  vertices with largest principal ratio is a kite graph.
\end{theorem}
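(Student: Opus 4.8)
The plan is to assemble the structural lemmas into a complete description of $G$ and then to rule out any missing clique edges. By Lemma~\ref{connect_every} the vertices $x_1,\dots,x_{k-2}$ form a pendant path and $x_k$ is adjacent to every vertex of $C$; by Lemmas~\ref{k_2_lemma} and~\ref{k_1_lemma} the vertices $x_{k-2}$ and $x_{k-1}$ have degree exactly $2$. First I would note that these facts force $x_1,\dots,x_k$ to be a pendant path: $x_1$ has degree $1$, each of $x_2,\dots,x_{k-1}$ has degree $2$, and hence no vertex of $C$ is adjacent to any of $x_1,\dots,x_{k-1}$. Thus $G$ is obtained from the kite $P_k\cdot K_{n-k+1}$ by possibly deleting some edges inside $C$; equivalently, on $C\cup\{x_k\}$ the vertex $x_k$ is joined to all of $C$, but $C$ need not be complete. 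It remains only to show that $C\cup\{x_k\}$ is in fact a clique.

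The second step records the crucial consequence of the pendant path. Because $x_1,\dots,x_k$ is a pendant path and $x_k=1$, the eigenvector equations $\lambda_1 x_1=x_2$ and $\lambda_1 x_i=x_{i-1}+x_{i+1}$ for $2\le i\le k-1$ hold with equality, so the recurrence in the proof of Lemma~\ref{path_bound} yields the exact identity
\[
 \frac{x_k}{x_1}=\frac{\sigma^{k}-\sigma^{-k}}{\sigma-\sigma^{-1}}=:f(\sigma),
\]
where $f$ is strictly increasing for $\sigma>1$. The important point is that this identity follows from the path structure alone: it uses only the eigenvector equations along $x_1,\dots,x_k$ and is entirely insensitive to the adjacencies inside $C$. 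Since $\gamma(G)=x_k/x_1=f(\sigma)$ and $\sigma$ is an increasing function of $\lambda_1$, the principal ratio has become a strictly increasing function of $\lambda_1$ alone.

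The third step is the contradiction argument. Suppose $C\cup\{x_k\}$ is not complete, and pick a non-edge $\{u,v\}$ with $u,v\in C$; let $G_+=G+\{u,v\}$, with largest eigenvalue $\lambda_1^+$, eigenvector entries $x_w^+$, and $\sigma_+$ the corresponding quantity from Lemma~\ref{path_bound}. Adding an edge to a connected graph strictly increases its spectral radius: testing the Perron vector of $G$ in the Rayleigh quotient~\eqref{rayleigh} gives $\lambda_1^+\ge\lambda_1+2x_ux_v/||v||_2^2>\lambda_1$, so $\sigma_+>\sigma$. Since the added edge lies inside $C$, the path $x_1,\dots,x_k$ remains a pendant path in $G_+$, so by the same recurrence $x_k^+/x_1^+=f(\sigma_+)$ exactly, and the recurrence also gives $x_1^+<x_2^+<\dots<x_k^+$. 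Therefore
\[
 \gamma(G_+)=\frac{\max_w x_w^+}{\min_w x_w^+}\ge\frac{x_k^+}{x_1^+}=f(\sigma_+)>f(\sigma)=\gamma(G),
\]
contradicting the maximality of $G$. Hence $C\cup\{x_k\}$ is complete and $G=P_k\cdot K_{n-k+1}$ is a kite graph.

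I expect the only genuine subtlety to be the observation exploited in the last display: one need not re-verify that $x_k$ and $x_1$ remain the extreme-entry vertices of $G_+$. Because $\gamma(G_+)$ is the ratio of the global maximum to the global minimum entry, it is bounded below by the ratio $x_k^+/x_1^+$ of these two particular vertices, and that ratio equals $f(\sigma_+)$ purely by the pendant-path recurrence. This decoupling of the exact formula from the (possibly shifting) location of the extreme entries is what makes the edge-addition argument go through cleanly; everything else reduces to the monotonicity of $f$ and the standard strict monotonicity of $\lambda_1$ under edge addition.
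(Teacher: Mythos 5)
Your proof is correct and takes essentially the same approach as the paper: the structural lemmas reduce $G$ to a kite with possibly missing edges inside $C$, the pendant path $x_1,\dots,x_k$ forces the exact identity $\gamma = (\sigma^k-\sigma^{-k})/(\sigma-\sigma^{-1})$, and the monotonicity of this expression in $\sigma$ together with the strict increase of $\lambda_1$ under edge addition gives the contradiction. The only cosmetic difference is that you add a single missing edge and lower-bound $\gamma(G_+)$ by the ratio $x_k^+/x_1^+$ (a tidy way to sidestep re-identifying the extreme-entry vertices), whereas the paper passes directly to the complete kite $P_k \cdot K_{n-k+1}$ and compares via maximality; the underlying mechanism is identical.
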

\addtocounter{theorem}{-1}
}
\begin{proof}
  It remains to show that $C$ induces a clique. Assume it does not, and let $H$ be the graph $P_k \cdot K_{n-k+1}$. We will show
  that $\gamma(H) > \gamma(G)$, and this contradiction tells us that $C$ is a clique. As before, lemma \ref{path_bound} gives that
\[
\gamma(H)  = \frac{\sigma_H^k - \sigma_H^{-k}}{\sigma_H - \sigma_H^{-1}},
\]
where
\[
\sigma(H) = \frac{\lambda_1(H) - \sqrt{\lambda_1(H)^2 -4}}{2}.
\]

Since $x_1,\cdots x_k$ form a pendant path we also know that 
\[
\gamma(G) = \frac{\sigma^k - \sigma^{-k}}{\sigma - \sigma^{-1}}.
\]

Now, $\lambda_1(H) > \lambda_1(G)$ because $E(G) \subsetneq E(H)$. Since the functions $g(x) = x+\sqrt{x^2-4}$ and $f(x) = (x^k - x^{-k})/(x-x^{-1})$ are increasing when $x\geq 1$, we have $\gamma(H) > \gamma(G)$.

 \end{proof}

\section*{Acknowledgements}

We would like to thank greatly Xing Peng for helpful discussions and comments on an earlier draft of this paper.

\bibliographystyle{plain}
\bibliography{bib}

\end{document}